\documentclass{amsart}

\usepackage{tikz}
\usetikzlibrary{matrix,decorations.pathreplacing,calc}

\usepackage{enumitem}

\tikzset{ 1
table/.style={
  matrix of math nodes,
  row sep=-\pgflinewidth,
  column sep=-\pgflinewidth,
  nodes={rectangle,text width=3em,align=center},
  text depth=1.25ex,
  text height=2.5ex,
  nodes in empty cells,
  left delimiter=[,
  right delimiter={]},
  ampersand replacement=\&
}
}

\usepackage{amssymb}
\usepackage{amsmath}
\usepackage{amsfonts}
\usepackage{amsthm}
\usepackage{stmaryrd}
\usepackage[all]{xy}
\usepackage{mathrsfs}
\usepackage{graphicx}
\usepackage{hyperref}
\usepackage{color}
\usepackage{tikz}
\usetikzlibrary{matrix}
\usepackage{amsmath,amscd}
\usepackage{pifont}
\usetikzlibrary{arrows}

\setlength{\textwidth}{6in}
\setlength{\oddsidemargin}{0in}
\setlength{\evensidemargin}{0in}
\setlength{\textheight}{9in}
\setlength{\topmargin}{0in}
\setlength{\headheight}{0in}
\setlength{\headsep}{.3in}
\setlength{\parskip}{0pt}
\setlength{\parindent}{20pt}

\numberwithin{equation}{subsection}
\newtheorem{theorem}[subsection]{Theorem}

\newtheorem{corollary}[subsection]{Corollary}
\newtheorem{lemma}[subsection]{Lemma}
\newtheorem{proposition}[subsection]{Proposition}

\theoremstyle{definition}

\newtheorem{convention}[subsection]{Convention}
\newtheorem{definition}[subsection]{Definition}

\newtheorem{notation}[subsection]{Notation}

\newtheorem{remark}[subsection]{Remark}

\newtheorem{hypothesis}[subsection]{Hypothesis}

\def\calC{\mathcal{C}}

\def\calE{\mathcal{E}}

\def\calO{\mathcal{O}}

\def\calW{\mathcal{W}}

\def\gothm{\mathfrak{m}}

\def\gothS{\mathfrak{S}}
\def\gothT{\mathfrak{T}}

\def\AAA{\mathbb{A}}

\def\CC{\mathbb{C}}
\def\FF{\mathbb{F}}
\def\GG{\mathbb{G}}

\def\PP{\mathbb{P}}
\def\QQ{\mathbb{Q}}

\def\ZZ{\mathbb{Z}}

\def\bfB{\mathbf{B}}

\def\rmT{\mathrm{T}}

\DeclareMathOperator{\Max}{Max}

\newcommand{\rig}{\mathrm{rig}}

\newcommand{\wt}{\mathrm{wt}}

\newcommand{\Zp}{\ZZ_p}
\newcommand{\val}{\mathrm{val}}
\newcommand{\Tr}{\mathrm{Tr}}

\DeclareMathOperator{\GL}{GL}

\newcommand{\adm}{\mathrm{adm}}

\pgfkeys{tikz/mymatrixenv/.style={decoration=brace,every left delimiter/.style={xshift=3pt},every right delimiter/.style={xshift=-3pt}}}
\pgfkeys{tikz/mymatrix/.style={matrix of math nodes,left delimiter=[,right delimiter={]},inner sep=2pt,column sep=1em,row sep=0.5em,nodes={inner sep=0pt}}}
\pgfkeys{tikz/mymatrixbrace/.style={decorate,thick}}
\newcommand\mymatrixbraceoffseth{0.5em}
\newcommand\mymatrixbraceoffsetv{0.2em}

\newcommand*\mymatrixbraceright[4][m]{
    \draw[mymatrixbrace] ($(#1.north west)!(#1-#3-1.south west)!(#1.south west)-(\mymatrixbraceoffseth,0)$)
        -- node[left=2pt] {#4} 
        ($(#1.north west)!(#1-#2-1.north west)!(#1.south west)-(\mymatrixbraceoffseth,0)$);
}

\newcommand*\mymatrixbracetop[4][m]{
    \draw[mymatrixbrace] ($(#1.north west)!(#1-1-#2.north west)!(#1.north east)+(0,\mymatrixbraceoffsetv)$)
        -- node[above=2pt] {#4} 
        ($(#1.north west)!(#1-1-#3.north east)!(#1.north east)+(0,\mymatrixbraceoffsetv)$);
}

\begin{document}\large

\title{Slopes for higher rank Artin--Schreier--Witt Towers} 

\author{Rufei Ren}
\address{Rufei Ren, University of California, Irvine, Department of
Mathematics, 340 Rowland Hall, Irvine, CA 92697}
\email{rufeir@math.uci.edu}
\author{Daqing Wan}
\address{Daqing Wan, University of California, Irvine, Department of
Mathematics, 340 Rowland Hall, Irvine, CA 92697}
\email{dwan@math.uci.edu}
\author{Liang Xiao}
\address{Liang Xiao, University of Connecticut, Department of Mathematics, 341 Mansfield Road, Unit 1009, Storrs, CT 06269-1009}
\email{liang.xiao@uconn.edu}

\author{Myungjun Yu}
\address{Myungjun Yu, University of Michigan, Department of Mathematics,  
2074 East Hall,
530 Church Street,
Ann Arbor, MI 48109-1043}
\email{myungjuy@umich.edu}

\thanks{L.X. is partially supported by Simons Collaboration Grant \#278433 and NSF Grant DMS--1502147.}
\date{\today}

\begin{abstract}
We fix a monic polynomial $\bar f(x) \in \FF_q[x]$ over a finite field of characteristic $p$ of degree relatively prime to $p$, and consider the $\ZZ_{p^{\ell}}$-Artin--Schreier--Witt tower defined by $\bar f(x)$; this is a tower of curves $\cdots \to C_m \to C_{m-1} \to \cdots \to C_0 =\AAA^1$, whose Galois group is canonically isomorphic to $\ZZ_{p^\ell}$, the degree $\ell$ unramified extension of $\ZZ_p$, which is abstractly isomorphic to $(\ZZ_p)^\ell$ as a topological group.
We study the Newton slopes of zeta functions of this tower of curves. This reduces to the study of the Newton slopes of L-functions associated to characters of the Galois group of this tower.  We prove that, when the conductor of the character is large enough, the Newton slopes of the L-function 
asymptotically form a finite union of arithmetic progressions.  As a corollary, we prove the spectral halo property of the spectral variety associated to the $\ZZ_{p^{\ell}}$-Artin--Schreier--Witt tower (over a large subdomain of the weight space). This extends the main result in \cite{Davis-wan-xiao} from rank one case $\ell=1$ to the higher rank case $\ell\geq 1$. 
\end{abstract}

\subjclass[2010]{11T23 (primary), 11L07 11F33 13F35 (secondary).}
\keywords{Artin--Schreier--Witt towers, $T$-adic exponential sums, Slopes of Newton polygon, $T$-adic Newton polygon for Artin--Schreier--Witt towers, Eigencurves}
\maketitle

\setcounter{tocdepth}{1}
\tableofcontents

\section{Introduction}
The topic we study in this paper reflects interests from two related areas. We shall first introduce our theorem from the $p$-adic and Iwasawa theoretic perspective of $L$-functions of varieties, and then explain the (philosophical) implication on spectral halo of eigenvarieties.

For a positive integer $\ell$, a $\ZZ_p^{\ell}$-Witt tower over a finite field $\FF_q$ of characteristic $p$ is a sequence of finite \'etale Galois covers over $\FF_q$, \[\cdots \to C_m \to \cdots \to C_1 \to C_0= \AAA^1,\] whose total Galois group is isomorphic to $\ZZ_p^{\ell}$. The integer $\ell$ is called the 
\emph{rank} of the tower. All such Witt towers, uncountably many,  can be constructed explicitly from Witt vectors, and their genera 
can be read off from an explicit formula, see \cite{KW}.  
A main interest in arithmetic geometry is to understand the zeros of the zeta-functions of the curves $C_m$ over $\FF_q$. In the context of Witt towers and the spirit of Iwasawa theory, a natural question is: what are the $p$-adic valuations (slopes) of the zeros of the zeta-function of $C_m$, especially what is the asymptotic behavior as $m \to \infty$? This is an emerging new field of study, which is expected to be quite fruitful and yet rather complicated in general, as there are too many Witt towers and most of them behave very badly. In order for the valuation sequence to have a strong stable property as $m$ grows, it is reasonable (and necessary) to assume that the genus sequence 
has a stable property. Fortunately, Witt towers with a stable genus formula can be classified, and this is recently 
done in  \cite{KW}. It is then natural to investigate the deeper slope stable property for the zeta function sequence 
of a genus stable Witt tower. 

The first nontrivial case is when the tower is defined by the Teichm\"uller lift of a polynomial over $\FF_q$ (see the next paragraph), called the Artin--Schreier--Witt tower, which does satisfy the genus stable property. 
When the Artin--Schreier--Witt tower has the Galois group $\ZZ_p$ (rank one case), the slope stability  question has been successfully answered  in  \cite{Davis-wan-xiao}, where it is shown that the valuations of the zeros  are given by a finite union of arithmetic progressions. This implies a strong  stable property for the slopes when $m \to \infty$.  Our goal of this paper is to generalize the results in \cite{Davis-wan-xiao} to the higher rank case, that is, to Artin--Schreier--Witt towers 
whose Galois groups are canonically identified with $\ZZ_{p^\ell}$ which is the unramified extension of $\ZZ_p$ of degree $\ell$, by a suitable adaptation of the methods in \cite{Davis-wan-xiao}.  The argument turns out to be more difficult because the space of characters is now multi-dimensional (see the discussion after Theorem~\ref{mainforC}).

Let us be more precise.
Fix a prime number $p$. Let $\FF_q$ be a finite extension of $\FF_p$ of degree $a$ so that $q =p ^a$. Let $\ell$ be an integer which divides $a$.
For an element $\bar b \in \overline{\FF}_q^\times$, let $\omega(\bar b)$ denote its Teichm\"uller lift in $\ZZ_q$ (the unramified extension of $\ZZ_p$ with residue field $\FF_q$); we put $\omega(0) = 0$.
Let $\sigma$ denote (the lift of) the arithmetic $p$-Frobenius on $\FF_q$ and $\ZZ_q$.

We fix a monic polynomial $\bar f(x) = x^d + \bar a_{d-1}x^{d-1} + \cdots +  \bar a_0 \in \FF_q[x]$ whose degree $d$ is \emph{not} divisible by $p$. 
We write $\bar a_d=1$, and $a_i: = \omega(\bar a_i)$ for $i=0, \dots, d$. Let $f(x)$ denote the polynomial $ x^d + a_{d-1}x^{d-1} + \cdots + a_0 \in \ZZ_q[x]$, called the \emph{Teichm\"uller lift} of the polynomial $\bar f(x)$.
The $\ZZ_{p^{\ell}}$-\emph{Artin--Schreier--Witt} tower associated to $f(x)$ is the sequence of curves $C_m$ over $\FF_q$ defined by 
\[
C_m: \quad \underline y_m^{F^\ell} - \underline y_m =  \sum_{i=0}^d(\bar a_ix^i, 0, 0,\dots)_m,
\]
where $\underline y_m = (y_1, y_2, \dots, y_m)$ are viewed as Witt vectors of length $m$, and $\bullet^F$ means raising each Witt coordinate to the $p$-th power.
In explicit terms, this means that $C_1$ is the usual Artin--Schreier curve given by $y_1^{p^\ell} - y_1 = \bar f(x)$, and $C_2$ is the curve above $C_1$ given by an additional equation (over $\FF_q$)
\[
y_2^{p^\ell} - y_2 + \frac{y_1^{p^{\ell+1}}-y_1^p - (y_1^{p^\ell} - y_1)^p}{p} =  \frac{f^\sigma(x^p) - (f(x))^p}{p} \quad \bmod p,\]
where $\sigma$ is the Frobenius automorphism and $ f^\sigma(x) : = x^d +  \sigma(a_{d-1}) x^{d-1} + \dots + \sigma(a_0)$.

The Galois group of the tower may be identified with $\ZZ_{p^\ell}$, such that $a\in \ZZ_{p^\ell}$ sends $\underline{y}_m$ to $\underline{y}_m+\underline{a}_m$, 
where $\underline{a}_m$ denotes the $m$-th truncated Witt vector of $a$.  Each curve $C_m$ has a zeta function defined by
\[
Z(C_m,s) = \exp \left(\sum_{k\geq 1} \frac{s^k}{k} \cdot \# C_m(\FF_{q^k}) \right) =\frac{P(C_m,s)}{1-qs},  
\]
where $P(C_m, s) \in 1+s\ZZ[s]$ is a polynomial of degree $2g(C_m)$, pure of $q$-weight $1$, and $g(C_m)$ denotes the genus of $C_m$.   

Write $\CC_p$ for the completion of an algebraic closure of $\QQ_p$, and let $\calO_{\CC_p}$ denote its valuation ring with maximal ideal $\gothm_{\CC_p}$.
Using the Galois group of $C_m$ over $\AAA^1$, we may factor $Z(C_m,s)$ into a product of L-functions: 

\[Z(C_m,s)=\prod\limits_{\chi:~ \ZZ_{p^{\ell}}/p^m\ZZ_{p^{\ell}} \to \CC_p^\times }L_f(\chi, s),\]
where for each character $\chi$, $L_f(\chi, s)$ is the $L$-function on $\AAA_{\FF_q}^1$ given by

%
\begin{equation}
L_f(\chi, s)  = \prod\limits_{x \in |\AAA^1|}
\frac{1}{1-\chi\Big(\Tr_{\QQ_{q^{\deg(x)}}/\QQ_{p^\ell}}\big( f(\omega( x))\big)\Big)s^{\deg(x)}}\,,
\end{equation}
where $ |\AAA^1|$ denotes the set of closed points of $\AAA^1_{\FF_{q}}$ and $\omega(x)$ denotes the Teichm\"uller lift of  any of the conjugate geometric points in the closed point $x$. For $\chi=1$, the L-function $L_f(1,s)$ is simply the trivial factor $1/(1-qs)$, which is the 
zeta function of the affine line. 

The goal of this paper is to understand the $p$-adic valuation of the zeros of these $L$-functions for all non-trivial finite characters $\chi$. For this 
purpose, we will also need to consider the characters which are not finite and put them in a family. 
In this paper, all characters $ \chi: \ZZ_{p^\ell} \to \CC_p^\times$ are assumed to be continuous. For a finite character $\chi$, let $m_{\chi}$ be the nonnegative integer so that the image of ${\chi}$ has cardinality $p^{m_{\chi}}$; we call $m_{\chi}$ the \emph{conductor} of ${\chi}$. Our normalization on Newton polygons is as follows: given a valuation ring $R$ and an element $\varpi$ of positive valuation, the $\varpi$-adic Newton polygon of a power series $c_0 + c_1s + \cdots \in R\llbracket s\rrbracket$ is the lower convex hull of the points $(k,\val_\varpi(c_k))$ ($k \in \ZZ_{\geq 0}$), where the valuation $\val_\varpi(-)$ is normalized so that $\val_\varpi(\varpi)=1$.

\begin{theorem}[Main Theorem]\label{mainforL}
For any nontrivial finite character $\chi$ with conductor $m_\chi$, $L_f(\chi, s)$ is a polynomial of degree $dp^{m_\chi-1}-1$. Write 
$$L_f(\chi, s) = \sum\limits_{k=0}^{dp^{m_\chi-1}-1} c_ks^k.$$
We have the following.
\begin{enumerate}[label=(\roman*)]
\item For any $0 < n\leq p^{m_\chi-1}$, we have $\val_q(c_{nd-1})=\frac{n(nd-1)}{2p^{m_\chi-1}}$ and $\val_q(c_{nd})=\frac{n(nd+1)}{2p^{m_\chi-1}}$.
\item For any $0 < n\leq p^{m_\chi-1}$, the $q$-adic Newton polygon of $L_f(\chi, s)$ passes through the points $\big(nd-1,\frac{n(nd-1)}{2p^{m_\chi-1}}\big)$ and  $\big(nd,\frac{n(nd+1)}{2p^{m_\chi-1}}\big)$.
\item The $q$-adic Newton polygon of $L_f(\chi, s)$ has slopes (in increasing order) 
\[\bigcup_{i=1}^{p^{m_\chi-1}} \{\alpha_{i1},\alpha_{i2},\dots,\alpha_{id}\}-\{0\}, 
\]
where \[\begin{cases}
\alpha_{ij}=\frac{i-1}{p^{m_\chi-1}}& \mathrm{for}\ j=1,\\
\frac{i-1}{p^{m_\chi-1}}< \alpha_{ij}< \frac{i}{p^{m_\chi-1}}& \mathrm{for}\ j>1.
		\end{cases}\]
	\end{enumerate}
\end{theorem}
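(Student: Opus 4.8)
The plan is to adapt the $T$-adic Dwork-theoretic method of \cite{Davis-wan-xiao}, the new feature being the bookkeeping forced by the unramified ($\ell$-fold) twist.

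\emph{Reduction and trace formula.} Since the trace form on the unramified extension $\ZZ_{p^\ell}/\ZZ_p$ is a perfect pairing, a finite character $\chi$ of conductor $m_\chi$ can be written as $x\mapsto\zeta^{\Tr_{\ZZ_{p^\ell}/\ZZ_p}(cx)}$ for some $c\in\ZZ_{p^\ell}^\times$ and some primitive $p^{m_\chi}$-th root of unity $\zeta$; transitivity of the trace then identifies $L_f(\chi,s)$ with the $L$-function on $\AAA^1_{\FF_q}$ of the exponential sums of the polynomial $c\,f(x)\in\ZZ_q[x]$ (of degree $d$, with unit leading coefficient $c$) against the additive character $\zeta^{\Tr_{\QQ_{q^{\deg x}}/\QQ_p}(-)}$. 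Replacing $\zeta$ by $1+T$ produces a $T$-adic $L$-function $L_f(T,s)\in 1+s\,\ZZ_p[[T]][[s]]$, and the overconvergent $T$-adic Dwork trace formula — in the form suitable for the lift $cf$, whose coefficients, unlike in \cite{Davis-wan-xiao}, need not be Teichm\"uller — expresses $L_f(T,s)$, up to the trivial factor of $\AAA^1$, as the characteristic power series $\det(1-s\,\Psi_T)$ of a nuclear operator $\Psi_T$ on a $T$-adic Banach module of overconvergent power series, $\Psi_T$ being the composite of multiplication by the $T$-adic splitting function attached to $cf$ with a $T$-adic Frobenius.

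\emph{The Hodge lower bound.} In the monomial basis the matrix of $\Psi_T$ has entries which, up to Frobenius, are coefficients of the splitting function of the degree-$d$ polynomial $cf$, and I would bound their $T$-adic valuations from below in the usual way; here the hypothesis $p\nmid d$ is what guarantees that the monomial $x^d$ genuinely occurs. Inserting these estimates into the theory of Newton polygons of Fredholm determinants, then specializing $T\mapsto\zeta-1$ and using $\val_q(\zeta-1)=\frac{1}{a(p-1)p^{m_\chi-1}}$ together with the classical fact that $L_f(\chi,s)$ is a polynomial of degree $dp^{m_\chi-1}-1$, one gets that the $q$-adic Newton polygon of $L_f(\chi,s)$ lies on or above the Hodge polygon $\mathrm{HP}$ whose $k$-th slope is $\frac{k}{dp^{m_\chi-1}}$, i.e.\ $\mathrm{HP}(x)=\frac{x(x+1)}{2dp^{m_\chi-1}}$ on $[0,dp^{m_\chi-1}-1]$. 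A direct check shows $\mathrm{HP}$ passes through the points $\big(nd-1,\frac{n(nd-1)}{2p^{m_\chi-1}}\big)$ and $\big(nd,\frac{n(nd+1)}{2p^{m_\chi-1}}\big)$, and that in the $i$-th block its slopes $\frac{(i-1)d+j-1}{dp^{m_\chi-1}}$ exceed $\frac{i-1}{p^{m_\chi-1}}$ for $j>1$. This gives at once the lower bounds $\val_q(c_{nd-1})\ge\frac{n(nd-1)}{2p^{m_\chi-1}}$ and $\val_q(c_{nd})\ge\frac{n(nd+1)}{2p^{m_\chi-1}}$, the inequality $\mathrm{NP}\ge\mathrm{HP}$ at the two families of points, and the left-hand inequalities $\frac{i-1}{p^{m_\chi-1}}<\alpha_{ij}$ ($j>1$) in (iii).

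\emph{Sharpness, and conclusion.} The crux is the matching upper bounds $\val_q(c_{nd-1})=\frac{n(nd-1)}{2p^{m_\chi-1}}$ and $\val_q(c_{nd})=\frac{n(nd+1)}{2p^{m_\chi-1}}$. I would expand $\det(1-s\Psi_T)$ as an alternating sum of its principal minors, isolate in the coefficient of $s^{nd-1}$ (resp.\ $s^{nd}$) the minor on the first $nd-1$ (resp.\ $nd$) basis vectors, show that its diagonal-type term has strictly smaller $T$-adic valuation than every other term of the coefficient, and compute this term explicitly to see that it is nonzero; non-vanishing uses $p\nmid d$ together with $c\in\ZZ_{p^\ell}^\times$, and the step that genuinely goes beyond \cite{Davis-wan-xiao} is that this dominant term now involves the $\ell$ Frobenius conjugates of $c$, so one must check that the resulting Gauss-sum-like factor is still a unit. \textbf{This sharpness computation is where I expect the main obstacle to lie.} Granting it: (i) is immediate; for (ii), the Newton polygon, caught between $\mathrm{HP}$ and the coefficient valuations — which agree at $x=nd-1$ and $x=nd$ — must pass through those points; and for (iii), the pinned points force the segment over $[nd-1,nd]$ to have slope exactly $\frac{n}{p^{m_\chi-1}}$, yielding $\alpha_{i1}=\frac{i-1}{p^{m_\chi-1}}$, whereas if $\alpha_{id}$ equaled $\frac{i}{p^{m_\chi-1}}$ then the Newton polygon would be linear of that slope over $[id-2,id]$ and hence fall strictly below $\mathrm{HP}$ at $x=id-2$, a contradiction; the top block $i=p^{m_\chi-1}$ is handled by the functional equation (purity of weight one), completing the remaining inequalities of (iii).
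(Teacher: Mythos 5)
Your overall strategy is legitimate but it stalls exactly at the point where all of the paper's difficulty is concentrated, and you say so yourself. The reduction in your first paragraph (writing $\chi(x)=\zeta^{\Tr_{\QQ_{p^\ell}/\QQ_p}(cx)}$ with $c\in\ZZ_{p^\ell}^\times$ and trading the rank-$\ell$ tower for the rank-one $T$-adic $L$-function of the single polynomial $cf$) is valid and is precisely the reduction the paper mentions in its Remark after Theorem~\ref{mainforL}: it lands you in the setting of $\ZZ_p$-towers with a \emph{non-Teichm\"uller} lift, i.e.\ the situation of \cite{li}, rather than \cite{Davis-wan-xiao}. The Hodge lower bound and the convexity bookkeeping in your last paragraph are routine (modulo a small slip: if $\alpha_{id}$ equalled $\tfrac{i}{p^{m_\chi-1}}$, convexity together with the known slope on $[id,id+1]$ makes the polygon linear on $[id-1,id+1]$ and violates the Hodge bound at $x=id-1$, not at $x=id-2$). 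But the matching upper bound --- the exact valuations at $k=nd-1$ and $k=nd$ --- is not proved in your proposal; you flag it as ``where I expect the main obstacle to lie,'' and that obstacle is the theorem. In the paper this is Theorem~\ref{mainforC}(2), whose proof occupies Section~\ref{Sec:main proof} (Theorem~\ref{T:key technical theorem}) and ultimately rests on \cite[Proposition~3.4]{Davis-wan-xiao}; nothing in your sketch substitutes for it.

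Moreover, the mechanism you propose for the sharpness is not how the argument can go even in the known rank-one Teichm\"uller case: the principal $k\times k$ minor is not dominated by a single ``diagonal-type term'' whose nonvanishing one checks by inspection; its minimal-valuation part is a genuine determinant, and proving that this determinant is a unit times the expected power of the uniformizer is the hard content of \cite[Proposition~3.4]{Davis-wan-xiao} (here, of Theorem~\ref{T:key technical theorem}, which row-reduces the matrix using the differential equation of the splitting function to show the mod-$p$ minor lies in $\FF_q\llbracket\bar\gothT\rrbracket$ with coefficients independent of $\ell$). In your route there is an additional layer you have not addressed: since $c$ need not be Teichm\"uller, the splitting function of $cf$ must be built digit by digit from the Teichm\"uller expansion of $c$, and one has to prove that the higher digits only perturb the relevant minors beyond the critical valuation --- this is exactly the extra work carried out in \cite{li}, not a formality. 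So the proposal is a plausible alternative outline (per-character reduction to rank one, versus the paper's universal multi-variable character $C_f^*(\underline T,s)$, the polynomial $\gothS(\underline T)$, and the admissible locus containing all finite characters), but as written it has a genuine gap at the sharpness step, i.e.\ at the analogue of Theorem~\ref{mainforC}(2) for the twisted polynomial $cf$.
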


\begin{remark}
We do not know how to get the arithmetic progression property as in \cite{Davis-wan-xiao}, which is uniform in $\chi$ (depending only 
on the large conductor $m_{\chi}$, not on the choice of $\chi$ with the given conductor $m_\chi$). However,
for $j=1$, the slopes $\alpha_{i1} = \frac{i-1}{p^{m_\chi-1}}$ do form an arithmetic progression, which depends only on the conductor 
$m_{\chi}$. For any fixed $j>1$, part (iii) only proves that the slopes $\alpha_{ij}$ are approximately an arithmetic progression.  

If we restrict to those characters $\chi$ that factor through a fixed quotient $\eta: \ZZ_{p^\ell} \to \ZZ_p$, then the slopes $\alpha_{ij}$ form a union of finitely many  arithmetic progressions (independent of the character $\chi$ but a priori depending on the quotient $\eta$), as the problem reduces to the case of usual $\ZZ_p$-tower but with non-Teichm\"uller polynomials considered in \cite{li}.
It is unclear whether these arithmetic progressions depend on the choice of the quotient $\eta: \ZZ_{p^\ell} \to \ZZ_p$.	
	\end{remark}

It would be more convenient for us to consider the $p$-adic function defined by 
\begin{equation}\label{CL}
C_f^*(\chi,s)= L_f^*(\chi,s) L_f^*(\chi,qs)L_f^*(\chi,q^2s) \cdots,
\end{equation}
where $L_f^*(\chi,s) := (1 - \chi(\Tr_{\QQ_{q}/\QQ_{p^{\ell}}}(f(0)))s)L_f(\chi, s)$ is the L-function of $\chi$ over the torus $\mathbb{G}_m=\mathbb{A}^1 -\{0\}$.

From $C_f^*(\chi,s)$, one may recover $L_f^*(\chi, s)$ as $L_f^*(\chi, s) = \frac{C_f^*(\chi,s)}{C_f^*(\chi,qs)}$.
Hence Theorem \ref{mainforL} is essentially a corollary of the following.
\begin{theorem}\label{coincide}
Given a nontrivial finite character $\chi$ with conductor $m_\chi$, write 
$$C_f^*(\chi, s)=\sum\limits_{k=0}^\infty w_k(\chi)s^k.$$ Then for all $k\geq 0$, we have 
$$ \val_q(w_k(\chi))\geq \frac{k(k-1)}{2dp^{m_\chi-1}} \quad \textrm{and},
$$
$$\textrm{when }k=nd\textrm{ or }nd+1, \quad  \val_q(w_k(\chi))=\frac{k(k-1)}{2dp^{m_\chi-1}}.$$  
In particular, the $q$-adic Newton polygon of $C_f^*(\chi, s)$ passes through the points $(nd,\frac{n(nd-1)}{2p^{m_\chi-1}})$ and $(nd+1,\frac{n(nd+1)}{2p^{m_\chi-1}})$ for all $n\geq 0$.
	
\end{theorem}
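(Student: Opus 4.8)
\emph{Proof strategy.} The plan is to realize $C_f^*(\chi,s)$ as the Fredholm determinant of a Dwork-type operator, bound its matrix entries from below, and then pin down the exact valuations of the coefficients in positions $nd$ and $nd+1$. I would first pass to the $T$-adic picture. Using the identification of the Galois group with $\ZZ_{p^\ell}=W(\FF_{p^\ell})$ and a $T$-adic Artin--Hasse-type splitting function attached to the Teichm\"uller lift $f$, one builds a $T$-adic $C$-function $C_f(T,s)$, a power series in $T$ and $s$ over $\ZZ_q$ whose specialization $T\mapsto\pi_\chi$ ---for the element $\pi_\chi$ determined by the values of $\chi$ on a $\ZZ_p$-basis of $\ZZ_{p^\ell}$, with $\val(\pi_\chi)$ depending only on $m_\chi$--- recovers $C_f^*(\chi,s)$. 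By Dwork's trace formula applied on the torus, $C_f(T,s)=\det(1-s\,\Psi_T)$ for a completely continuous operator $\Psi_T$ on a suitable $p$-adic Banach module over $\ZZ_q\llbracket T\rrbracket$ with a standard orthonormal basis of monomials; it is convenient to replace $\Psi_T$ by its iterate $\Psi_T^{a}$ (recall $q=p^a$) to remove the arithmetic Frobenius twist.

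Next I would estimate the entries of $\Psi_T^{a}$. Each entry is, up to units, a coefficient of a product of translates of the splitting function $E(T,x)=\sum_n g_n(T)x^n$, whose $x$-adic Newton polygon is governed by $\deg f=d$: one has $\val_T(g_n(T))\ge \lceil n/d\rceil$, with $p\nmid d$ ensuring the top coefficient of $f$ contributes cleanly and the estimate is exact on $d\ZZ$. Feeding these bounds into the expansion of $\det(1-s\Psi_T^{a})$ as an alternating sum of principal minors, then specializing $T\mapsto\pi_\chi$ and renormalizing to $\val_q$ using $\val(\pi_\chi)$, yields the ``Newton lies above Hodge'' inequality $\val_q(w_k(\chi))\ge \frac{k(k-1)}{2dp^{m_\chi-1}}$ for all $k\ge 0$.

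It then remains to get equality at $k=nd$ and $k=nd+1$. Here I would isolate, among all products of matrix entries contributing to $w_{nd}(\chi)$ (resp.\ $w_{nd+1}(\chi)$), the unique ``Hodge-optimal'' term and show it is not cancelled: the lowest-valuation contribution comes from the principal $nd\times nd$ minor (resp.\ the appropriate $(nd+1)\times(nd+1)$ minor), and one reduces to checking that a determinant formed from the leading coefficients $g_0(T),g_d(T),g_{2d}(T),\dots$ of $E(T,x)$ ---together with one sub-leading coefficient in the $nd+1$ case--- is a $T$-adic unit times an explicit power of $T$; this uses the explicit shape of the principal part of the splitting function and again $p\nmid d$. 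Specializing and renormalizing yields $\val_q(w_{nd}(\chi))=\frac{n(nd-1)}{2p^{m_\chi-1}}$ and $\val_q(w_{nd+1}(\chi))=\frac{n(nd+1)}{2p^{m_\chi-1}}$, so the stated points lie on the $q$-adic Newton polygon of $C_f^*(\chi,s)$.

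The last step is the main obstacle, and it is precisely where the higher-rank case departs from \cite{Davis-wan-xiao}: since the character space is $\ell$-dimensional, the specialization $T\mapsto\pi_\chi$ no longer lands at a single well-understood uniformizer with a transparent Galois orbit, so one must rule out $p$-adic cancellation among several terms of equal $T$-adic valuation whose specializations could interfere. Controlling this requires careful bookkeeping of the Frobenius action entering through $\Tr_{\QQ_{q^{\deg x}}/\QQ_{p^\ell}}$ on the relevant coefficients, together with an argument that the Hodge-optimal term survives modulo the next power of the maximal ideal \emph{uniformly} in $\chi$ of the given conductor --- the same difficulty that limits Theorem~\ref{mainforL}(iii) to an approximate arithmetic progression for the slopes $\alpha_{ij}$ with $j>1$.
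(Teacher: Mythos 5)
Your overall architecture for the lower bound is the same as the paper's: a universal ($T$-adic) characteristic series, Dwork's trace formula over the torus, the estimate $\val(e_n)\geq \lceil n/d\rceil$ on the coefficients of the splitting-function product, and a minor expansion of the Fredholm determinant giving ``Newton above Hodge''; up to the cosmetic point that for $\ell>1$ the family must genuinely be $\ell$-dimensional (variables $T_1,\dots,T_\ell$, one for each $\chi(c_j^*)-1$, not a single $T$ specialized at one $\pi_\chi$), this part is fine. The genuine gap is in the equality step at $k=nd$ and $nd+1$, which you correctly flag as ``the main obstacle'' but do not actually resolve: you assert that the Hodge-optimal principal minor is ``a $T$-adic unit times an explicit power of $T$'' and that one must ``rule out $p$-adic cancellation \ldots by careful bookkeeping of the Frobenius action,'' but no mechanism is given, and in rank $\ell>1$ the naive version of this claim is false. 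The leading part of $w_k(\underline T)$ is a homogeneous form of degree $\lambda_k$ in $\ell$ variables, and even if its coefficients are units its specialization at a character can have valuation strictly larger than $\lambda_k\cdot\min_j\val_q(t_{j,\chi})$ (the paper stresses that the analogue of Theorem~\ref{coincide} for arbitrary continuous characters fails); so one must identify this leading form exactly and show that finite characters avoid its zero locus quantitatively.

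The paper supplies precisely these two missing ingredients. First, Theorem~\ref{mainforC}(2), proved via Theorem~\ref{T:key technical theorem}: the principal $k\times k$ minor of the Dwork matrix mod $p$ is a power series in the single linear form $\bar\gothT=\sum_j\bar c_j\pi_j$ with coefficients independent of $\ell$ (this is where the Frobenius bookkeeping is actually done, via the row-recursion coming from the logarithmic derivative of the splitting product), so the leading term of $w_k(\underline T)$ is a unit times $\gothS(\underline T)^{\lambda_k/\ell}$ with $\gothS(\underline T)=\prod_{i=1}^\ell\bigl(\sum_{j=1}^\ell\sigma^i(c_j)T_j\bigr)$, and the unit question reduces to the known case $\ell=1$ of \cite{Davis-wan-xiao}. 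Second, Lemma~\ref{L:admissible locus contains finite characters}: because $c_1,\dots,c_\ell$ remain an orthonormal basis of $\ZZ_{p^\ell}[\zeta_{p^{m_\chi}}]$ over $\ZZ_p[\zeta_{p^{m_\chi}}]$, taking norms gives $\val_q(\gothS(\underline t_\chi))=\ell\cdot\min_j\val_q(t_{j,\chi})$ for every finite non-trivial $\chi$, i.e.\ finite characters lie in the admissible locus; only then does the specialized leading term achieve the Hodge bound and dominate the error terms in $pI^{\lambda_k}+I^{\lambda_k+1}$, yielding the equalities. Without these two statements your argument does not close, and note also that your final remark conflates this issue with the approximate-arithmetic-progression limitation in Theorem~\ref{mainforL}(iii): the vertex equalities are exact and uniform once admissibility and the shape of the leading term are established; the non-uniformity concerns only the interior slopes $\alpha_{ij}$, $j>1$.
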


We will show that Theorems~\ref{mainforL} and \ref{coincide} follow from Theorem~\ref{mainforC} below, in \ref{S:Thm 1.4=>1.1 and 1.3}.

%
%
%
%
%
%
%
%
%
%
%
%

To effectively prove Theorem \ref{coincide}, it is important to consider all characters in a big family.
We fix a basis $\{ c_1,\dots, c_\ell\}$ of $\ZZ_{p^\ell}$ as a free $\ZZ_p$-module; we write $\bar c_j = c_j \bmod p$ for each $j$.
 The Galois group $\ZZ_{p^\ell}$ of the tower can be identified with $\ZZ_p^\ell$ explicitly as 
\[\xymatrix@R=0pt{
\qquad&\ZZ_{p^\ell} \ar[r]^{\cong}&  \Zp^\ell\\
& x \ar@{|->}[r] & \big(\Tr_{\QQ_{p^\ell}/\QQ_p}(xc_1),\dots,\Tr_{\QQ_{p^\ell}/\QQ_p}(xc_\ell)\big).
} \]
We consider the \emph{universal character} of $\ZZ_{p^\ell}$:
\[
\xymatrix@R=0pt@C=10pt{
\chi_\mathrm{univ}: & \ZZ_{p^\ell} \ar[rr] && \ZZ_p\llbracket \underline T \rrbracket ^\times :=  \ZZ_p \llbracket T_1, \dots, T_\ell\rrbracket^\times
\\
&x \ar@{|->}[rr] && (1+T_1)^{\Tr_{\QQ_{p^\ell}/\QQ_p}(xc_1)}
 \cdots (1+T_\ell)^{ \Tr_{\QQ_{p^\ell}/\QQ_p}(xc_\ell)}.
}
\]
(When $\ell=1$, we simply write $T$ for $\underline T$.)
Any continuous character $\chi: \ZZ_{p^\ell} \to \CC_p^\times$ can be recovered from $\chi_\mathrm{univ}$ by evaluating each $T_j$ at $\chi(c_j^*)-1$, where $c_1^*, \dots, c_\ell^* \in \ZZ_{p^\ell}$ are elements such that $\Tr_{\QQ_{p^\ell}/\QQ_p}(c_i^*c_j)$ is equal to $1$ if $i=j$ and is equal to $0$ if $i\neq j$.

Similar to the finite character case, we will define in Section \ref{section 2} a power series
\[
C_f^*(\chi_\mathrm{univ}, s) = C_f^*(\underline{T},s)= 1+ w_1(\underline{T})s + w_2 (\underline{T})s^2+ \cdots \in 1+ s\ZZ_p\llbracket \underline{T}\rrbracket \llbracket s \rrbracket,\]
for the universal character $\chi_\mathrm{univ}$. This power series interpolates $C_f^*(\chi,s)$ for all (finite) characters $\chi: \ZZ_{p^\ell} \to \CC_p^\times$ via the formula
\[
C_f^*(\chi, s) = C_f^*(\underline T, s)|_{T_j = \chi(c_j^*)-1 \textrm{ for all }j}.
\]

\begin{theorem}\label{mainforC}
Let $I$ denote the ideal $ (T_1, \dots, T_\ell) \subseteq \ZZ_p\llbracket \underline T \rrbracket$.
For $k \in \ZZ_{\geq 0}$, we put $
\lambda_k = \frac{ ak(k-1)(p-1)}{2d}
$.  Then we have the following.

\emph{(1)} For any $k>0$, we have
\begin{equation}
\label{E:hodge bound}
w_k(\underline T) \in I^{\lceil \lambda_k \rceil}.
\end{equation}

\emph{(2)} When $k=nd$ or $nd+1$, we have\footnote{Under the hypothesis $p\nmid d$, $\lambda_{nd}/\ell =  \frac{an(nd-1)(p-1)}{2\ell}$ and $\lambda_{nd+1}/\ell = \lambda_{nd}/\ell + \frac{an(p-1)}\ell$ are always integers.}
\begin{equation}\label{leading term}
w_k(\underline{T}) \equiv u_k\cdot  \gothS(\underline T)^{\lambda_k/\ell} \ \bmod (pI^{\lambda_k} + I^{\lambda_k+1})  
\end{equation}
for some unit $u_k \in \ZZ_p$, where $\gothS(\underline T)$ is the following polynomial
\begin{equation}
\label{E:gothS}
\gothS(\underline T): = 
\prod\limits_{i=1}^\ell\Big(\sum\limits_{j=1}^\ell \sigma^i(c_j)T_j\Big).
\end{equation}
\end{theorem}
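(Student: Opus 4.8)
As set up in Section~\ref{section 2}, the power series $C_f^*(\underline T, s)$ is the Fredholm determinant $\det(1 - s\alpha \mid B)$ of a completely continuous operator $\alpha$ on a $p$-adic Banach module $B$ over $\ZZ_p\llbracket \underline T\rrbracket$ (built from $\ZZ_q\llbracket \underline T\rrbracket$ equipped with a $\sigma$-semilinear structure whose $a$-fold twisted iterate recovers the Frobenius over $\FF_q$), admitting an explicit orthonormal basis $\{e_i\}_{i\geq 0}$ of monomial type, in which $\alpha$ is the composition of the ``monomial extraction'' map $\psi$ with multiplication by the $T$-adic splitting function $\gothE_f(x,\underline T) = \prod_{t=0}^d \gothE(\omega(\bar a_t)x^t,\underline T)$. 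The two inputs I would extract from this description are: (i) the degree-$m$ coefficient of $\gothE(\cdot,\underline T)$ lies in a controlled power of $I$, coming from the Artin--Hasse/$T$-adic exponential; and (ii) after applying $\sigma^i$, the part of $\gothE(z,\underline T)$ linear in $\underline T$ is, up to a $p$-adic unit and a monomial in $z$, the linear form $\sum_{j=1}^\ell \sigma^i(c_j)T_j$. Since $w_k(\underline T) = (-1)^k \sum_{\#S = k}\det\bigl(\alpha|_{S\times S}\bigr)$ is, up to sign, the sum of principal $k\times k$ minors of the matrix of $\alpha$ in this basis, both parts of the theorem become assertions about those minors.

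\textbf{Part (1): the Hodge bound.} First I would bound the $I$-adic order of each matrix entry: combining (i) with the hypotheses $p\nmid d$ and $\bar f$ monic of degree $d$ (so that $x^d$ is the dominant monomial), one obtains $\alpha_{ij}\in I^{e_{ij}}$ for explicit exponents $e_{ij}$ whose ``staircase'' is the Hodge polygon with vertices $(k,\lambda_k)$, $\lambda_k = \frac{ak(k-1)(p-1)}{2d}$; this is the multivariable $I$-adic analogue of the Adolphson--Sperber/Wan lower bounds and of the estimates in \cite{Davis-wan-xiao}. A principal $k\times k$ minor is an alternating sum of products $\prod_{i\in S}\alpha_{i,\tau(i)}\in I^{\sum_{i\in S}e_{i\tau(i)}}$, and a short rearrangement/convexity argument gives $\sum_{i\in S}e_{i\tau(i)} \geq \lceil\lambda_k\rceil$ for every $S$ and every permutation $\tau$; hence $w_k(\underline T)\in I^{\lceil\lambda_k\rceil}$. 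Nothing new happens here beyond careful bookkeeping in the $\ell$-variable filtration $\{I^N\}_N$.

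\textbf{Part (2): the leading term at $k = nd$ and $k = nd+1$.} At these ``vertex'' indices the bound of Part~(1) is attained, and tracing through the inequalities shows that equality $\sum_{i\in S}e_{i\tau(i)} = \lambda_k$ pins down the subset $S$ and the permutation $\tau$ essentially uniquely (again because $p\nmid d$). I would then pass to the associated graded $\mathrm{gr}_I$, extract the principal symbol $\bar\alpha$ of $\alpha$, and compute the corresponding $k\times k$ minor of $\bar\alpha$. The $\sigma$-semilinear structure of $\alpha$ forces this minor to factor as a product over the $\ell$ distinct Frobenius twists $\sigma^1,\dots,\sigma^\ell$ of $\ZZ_{p^\ell}$ (each occurring $a/\ell$ times in the $a$-fold iterate, since $\sigma^\ell$ fixes $\ZZ_{p^\ell}$), and by (ii) the contribution of the twist $\sigma^i$ is a power of the linear form $\sum_{j=1}^\ell \sigma^i(c_j)T_j$ times a $p$-adic unit and a monomial factor that cancels in the end; assembling these yields $w_k(\underline T)\equiv u_k\cdot\gothS(\underline T)^{\lambda_k/\ell} \bmod (pI^{\lambda_k} + I^{\lambda_k+1})$. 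A final Galois-descent check, using that the whole expression is fixed by $\Gal(\QQ_q/\QQ_p)$, shows $u_k\in\ZZ_p$ rather than merely $\ZZ_q$, and the footnote's integrality of $\lambda_k/\ell$ makes the exponent meaningful.

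\textbf{Main obstacle.} Compared with the rank-one case of \cite{Davis-wan-xiao} and \cite{li}, the genuinely new difficulty lies in Part~(2): in higher rank the principal symbol $\bar\alpha$ is no longer essentially a scalar ``diagonal-dominant'' matrix but carries a real multivariable structure, so one must (a) prove that the minimal-order minor at $k = nd, nd+1$ is unique and identify the realizing permutation, which requires intertwining the $d$-periodic combinatorics of the monomial basis $\{e_i\}$ with the cyclic $\sigma$-action on the $\ZZ_p$-basis $\{c_1,\dots,c_\ell\}$ of $\ZZ_{p^\ell}$; and (b) evaluate that minor as the exact power of $\gothS(\underline T)$, which is where the product-over-Frobenius-twists structure must be established cleanly. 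This is also precisely the reason why, as explained in the Remark, one cannot run the same argument at non-vertex indices to obtain a uniform arithmetic progression.
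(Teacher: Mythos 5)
Your Part (1) follows essentially the paper's route (Dwork trace formula, entrywise $I$-adic bounds, then the minor expansion as in Proposition~\ref{lemma}), and your reduction of Part (2) to the principal $k\times k$ minor is also how the paper proceeds. The genuine gap is in the core of Part (2). First, equality in the Hodge bound does pin down the subset $S=\{0,\dots,k-1\}$, but it does \emph{not} pin down the permutation: for fixed rows and columns $\{0,\dots,k-1\}$ every permutation has the same lower bound $\sum_i (pm_i-n_{\tau(i)})/d=(p-1)(0+1+\cdots+(k-1))/d$, so the degree-$\lambda_k/a$ part of the minor is a genuine determinant of leading forms, not a single product. Second, and this is the heart of the matter, that homogeneous leading form is built from all the coefficients $e_n(\underline\pi)$ of $\prod_j E_{c_jf}(x)_{\pi_j}$, which involve arbitrary degrees in the $\pi_j$'s; nothing in your input ``(ii)'' (the part of the splitting function linear in $\underline T$) shows that this homogeneous polynomial of degree $\lambda_k/a$ in $\ell$ variables is proportional to a power of the single linear form $\bar\gothT=\sum_j\bar c_j\pi_j$. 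Asserting that the $\sigma$-semilinear structure ``forces'' the minor to factor into powers of the $\sigma^i$-twisted linear forms is precisely the statement that needs proof; it is the content of Theorem~\ref{T:key technical theorem}, which the paper establishes by a mechanism absent from your proposal: differentiating $\prod_j E_{c_jf}(x)_{\pi_j}$ yields the recursion of Lemma~\ref{induction}, which mod $p$ expresses each row with index $m$ prime to $p$ as an $\FF_q\llbracket\bar\gothT\rrbracket$-linear combination of lower rows, and the resulting row reduction shows the mod-$p$ minor lies in $\FF_q\llbracket\bar\gothT\rrbracket$ with coefficients independent of $\ell$.

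Third, your argument never establishes that $u_k$ is a \emph{unit}: the ``Galois-descent check'' only addresses $\ZZ_p$ versus $\ZZ_q$, not nonvanishing modulo $p$, and in higher rank the paper does not evaluate this coefficient directly either. Unitness is obtained exactly from the independence-of-$\ell$ statement in Theorem~\ref{T:key technical theorem}, which reduces the claim to the known rank-one case \cite[Proposition~3.4]{Davis-wan-xiao}; without that reduction (or a substitute computation of the leading coefficient) your proof of \eqref{leading term} is incomplete. A minor further point: the paper first reduces, via Lemma~\ref{L:independence}, to a Teichm\"uller basis $c_j=\omega(\bar c_j)$ (Hypothesis~\ref{H:teichmuller}), which is what makes the mod-$p$ recursion clean; some such normalization would be needed in your approach as well.
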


Theorem~\ref{mainforC} is the main technical result of this paper. Part (1) is proved at the end of Section~\ref{Sec:Hodge bound}; part (2) is proved at the end of Section~\ref{Sec:main proof}, relying on the key Theorem~\ref{T:key technical theorem}.

Let us now explain the philosophical meaning of Theorem~\ref{mainforC}.
The first estimate \eqref{E:hodge bound} uses a standard argument to establish certain Hodge bound.
It implies, for example, when $k=nd$ or $nd+1$, the ``leading term" (if nonzero) of $w_k(\underline T)$ must be a homogeneous polynomial of degree $\lambda_k$ in $\underline T$.
\begin{itemize}
\item[(i)]
When $\ell=1$, this leading term has to be a monomial in $T$; so specializing to any continuous non-trivial  character $\chi$ of $\ZZ_p$, this ``leading term" (if its coefficient is a $p$-adic unit) is also the ``leading term" of $w_k(\chi)$. Theorem~\ref{mainforC}(2) is proved in \cite[Proposition~3.4]{Davis-wan-xiao}, which is the key of the proof of \cite[Theorem~1.2]{Davis-wan-xiao}. 
\item[(ii)]
In clear contrast, when $\ell>1$, this ``leading term", even if its coefficients are $p$-adic units, may not continue to have smaller valuation than higher degree terms after certain specialization.
In particular, the na\"ive generalization of Theorem~\ref{coincide} to \emph{all} non-trivial characters of $\ZZ_{p^\ell}$ is \emph{false}.
It is thus of crucial importance to understand: what does the ``leading term" of $w_k(\underline T)$ look like?
This is exactly answered by \eqref{leading term}, which shows that the ``leading term" of $w_k(\underline T)$ is, up to a $p$-adic unit, a power of a particular polynomial $\gothS(\underline T)$ \emph{independent of $k$ and of the Teichm\"uller polynomial $f$}.

\end{itemize}

We also point out that the polynomial $\gothS(\underline T)$ modulo $p I^\ell+ I^{\ell+1}$ is canonically independent of the choice of the basis $\{c_1, \dots, c_\ell\}$ (Lemma~\ref{L:independence}).  Moreover, $\gothS(\underline T)$ is in some sense ``elliptic" as its zero \emph{avoids} all the evaluations of the $T_j$'s corresponding to \emph{finite} continuous non-trivial characters of $\ZZ_{p^\ell}$ (Lemma~\ref{L:admissible locus contains finite characters}).

\medskip
While Theorem~\ref{mainforC} is known when $\ell =1$ by \cite{Davis-wan-xiao}, its proof for general $\ell$ is quite different.
The idea lies in a careful study of the matrix whose characteristic power series gives rise to $C_f^*(\underline T, s)$.
We do this in two steps. The first step is to show that the leading term of $w_k(\underline T)$ comes from the determinant of the upper left $k\times k$-submatrix. The second step is to show that the determinant of the mod $p$ reduction of the upper left $k\times k$-submatrix is ``independent of $\ell$", in the sense that it is the same matrix for the $\ell=1$ case except replacing $T = T_1$ by the polynomial $\sum_{j=1}^\ell c_j T_j$; see Theorem~\ref{T:key technical theorem}.\footnote{The product over the Frobenius twists of $c_j$ is a result of the setup of the Dwork's trace formula; see Corollary~\ref{determinant}.}
In this way, we reduce the proof of Theorem~\ref{mainforC} for general $\ell$ to the known case of $\ell=1$.



\subsection{Analogy with the Igusa tower of (Hilbert) modular varieties}
An important philosophical implication of Theorem~\ref{mainforC} is through the close analogy between the $\ZZ_p$-Artin--Schreier--Witt tower and the Igusa tower  of  modular curves:
\begin{itemize}
\item
the Galois group $\ZZ_p$ of the $\ZZ_p$-Artin--Schreier--Witt tower is the additive version of the Galois group $\ZZ_{p}^\times$ of the Igusa tower of modular curves,
\item
the big Banach module $\widetilde {\mathbf{B}}$ in \eqref{E:widetilde B} is analogous to the space of overconvergent modular forms,
\item
the linear operator $\psi$ defined in \eqref{E:psi} is analogous to the $U_p$-operator, and
\item
the power series $C_f^*(T, s)$ is analogous to the characteristic power series of $U_p$.
\end{itemize}

Inspired by this analogy, we define the \emph{Artin--Schreier--Witt eigenvariety} to be the zero locus\footnote{We will explain in Section~\ref{Sec:ASW eigenvarieties} the meaning of ``zero locus."} of the universal multi-variable power series $C_f^*(\underline{T},s)$ inside $\GG_m^\rig \times (\calW - \{\underline 0\})$,\footnote{For Artin--Schreier--Witt tower, the trivial character behaves slightly differently.} where $\calW$ is the \emph{weight space} $\Max (\ZZ_p\llbracket \underline T\rrbracket[\frac 1p] )$; explicitly, $\calW$ is the $\ell$-dimensional open unit polydisk.\footnote{We refer to \ref{S:weight space} for the subtleties in defining $\calW$ and $\calW-\{0\}$.} As shown in the diagram below, this eigenvariety $\calE_f$ admits a \emph{weight map} $\wt$ to the weight space, and an ``$a_p$-map" to $\GG_m^\rig$ remembering the values of $s^{-1}$. The $p$-adic valuation of the image of the ``$a_p$-map" is called the \emph{slope} of the point.
\[
\xymatrix{
\calE_f\ar@/^10pt/[rr]^{\mathrm{slope}}\ar[d]_{\textrm{wt}} \ar[r]_-{a_p} & \mathbb{C}_p^\times \ar[r]_{\mathrm{val.}} & \mathbb Q.
\\
\calW-\{\underline 0\}
}
\]
This gives a full picture analogous to the case of eigencurves for overconvergent modular forms, or more generally eigenvarieties for overconvergent Hilbert modular forms.

A key component of the analogy is that the proof of the decomposition of the $\ZZ_p$-Artin--Schreier--Witt eigencurve (\cite[Theorem~4.2]{Davis-wan-xiao}) is very similar to the proof of the decomposition of the Coleman--Mazur eigencurve over the boundary of weight space (\cite[Theorem~1.3]{liu-wan-xiao}), where the Hodge estimate (see Definition~\ref{D:twisted incremental}) is analogous to certain variant of the Hodge estimate in \cite[Proposition~3.12(1)]{liu-wan-xiao}, and  the numerics provided by the Poincar\'e duality of L-functions corresponds to the numerics given by the Atkin--Lehner involution. The only difference is that the Hodge lower bound in \cite{liu-wan-xiao} is obtained by a slightly different mechanism.\footnote{In \cite{liu-wan-xiao}, we looked at the Betti realization instead of the de Rham realization to circumvent the technical difficulties caused by the geometry of the base modular curve. This is in fact a crucial point. We do not know how to prove spectral halo type results for overconvergent modular forms in the de Rham setup.}

A caveat for the reader is that, we think, the analogy is entirely on the philosophical level, one cannot deduce theorems about overconvergent Hilbert modular forms directly from the analogous results for Artin--Schreier--Witt towers.

The state-of-the-art technique on the study of spectral halo (based on \cite{liu-wan-xiao}) is intrinsic to $\GL_2(\QQ_p)$.\footnote{F. Andreatta, Iovita, and V. Pilloni \cite{AIP} defined certain extension of the Hilbert eigenvariety to the ``adic boundary" of the weight space. Unfortunately, they could not prove the analogous spectral halo result.
Building on this and \cite{liu-wan-xiao},  C. Johansson and J. Newton \cite{johansson-newton} recently generalized the Hodge estimate of \cite{liu-wan-xiao}. Still, the na\"ive generalization of the numerical coincidence no longer holds, and much less is known regarding to this extension, compare to the $\GL_2(\QQ_p)$-case.} It is natural to try to extend \cite{liu-wan-xiao} beyond this case, say to $\GL_2(\QQ_{p^\ell})$, that is to study the eigenvarieties associated to overconvergent Hilbert modular forms, for a totally real field $F$ of degree $\ell$ in which $p$ is totally inert. In this case, one can interpret the question in terms of the Igusa tower of $\ell$-dimensional Hilbert modular varieties with Galois group $\ZZ_{p^\ell}^\times$.  Via the analogy, this should correspond to the $\ZZ_{p^\ell}$-Artin--Schreier--Witt tower of varieties over the $\ell$-dimensional base $(\GG_m)^\ell$ for $\ell>1$.
There are now two distinct types of generalizations we encounter:
\begin{itemize}
\item[(a)] the weight space has become multi-dimensional, and
\item[(b)] the base of the variety has become multi-dimensional.
\end{itemize}
Interestingly, for automorphic eigenvarieties, these two generalizations appear simultaneously, whereas on the Artin--Schreier--Witt side, we can tackle them one at a time.

This paper addresses the generalization (a). The solution we propose is the following: it might be too much to ask for a decomposition of the eigenvariety over the entire weight space such that the slopes on each component are determined by the weight map, exactly because of the issue explained in (ii) after Theorem~\ref{mainforC}.\footnote{An alternative way to explain this is: the ``adic boundary" of the weight space is (non-canonically isomorphic to) $\PP^{\ell-1}_{\FF_p}$; so when $\ell=1$, there is only one direction approaching the boundary. But when $\ell>1$, we may have to give up on some ``bad directions" approaching the boundary. Theorem~\ref{mainforC} says that the bad direction is exactly the hypersurface defined by  the polynomial $\prod\limits_{i=1}^\ell\big(\sum\limits_{j=1}^\ell \sigma^i( c_j)T_j\big)$ mod $p$.}
Instead, we study a subspace of $\calW$, the \emph{admissible locus}, defined by
\[
\calW^\adm :=\big\{ \underline t \in \calW(\CC_p) - \{\underline 0\} \; |\; \val_q(\gothS(\underline t)) = \ell \cdot \min\{\val_q(t_1), \dots, \val_q(t_\ell)\} \big\}.
\]
This is an increasing union of affinoid subdomains of $\calW-\{0\}$, which is independent of the polynomial $f$, and is canonically independent of the choice of basis $\{c_1, \dots, c_\ell\}$ (Corollary~\ref{C:admissible locus independent}).
Moreover, $\calW^\adm$ contains all points corresponding to \emph{finite} non-trivial characters of $\ZZ_{p^\ell}$ (Lemma~\ref{L:admissible locus contains finite characters}).

One corollary of Theorem~\ref{mainforC} is the following.
\begin{theorem}
\label{T:eigenvariety decomposition}
Put $\calE_f^\adm: = \wt^{-1}(\calW^\adm)$. Then $\calE_f^\adm$ is the disjoint union 
\[
\calE_f^{\adm}=X_{0}\coprod X_{(0,1)}\coprod X_{1}\coprod X_{(1,2)}\coprod \cdots, \]
of infinitely many rigid subspaces, such that for each interval $J = [n,n]$ or $(n,n+1)$,
\begin{itemize}
\item
the map $\wt: X_J \to \calW^\adm$ is finite and flat, and
\item
for each point $x \in X_J$, we have
\[
\frac{\val_q(a_p(x))}{\val_q(\gothS(\wt(x)))} \in \frac{a(p-1)}{\ell}\cdot J.
\]
\end{itemize}
\end{theorem}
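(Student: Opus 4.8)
The strategy is to deduce Theorem~\ref{T:eigenvariety decomposition} from Theorem~\ref{mainforC} by a Newton polygon decomposition argument over the affinoid subdomains exhausting $\calW^\adm$, exactly mirroring the passage from \cite[Theorem~1.2]{Davis-wan-xiao} to \cite[Theorem~4.2]{Davis-wan-xiao} (or from \cite[Theorem~1.3]{liu-wan-xiao}). The key observation is that on $\calW^\adm$ the two estimates of Theorem~\ref{mainforC} become a genuine Newton polygon statement: writing $c = \min\{\val_q(t_1),\dots,\val_q(t_\ell)\}$, part (1) gives $\val_q(w_k(\underline t)) \ge \lceil \lambda_k/\ell \rceil \cdot c \ge \frac{\lambda_k}{\ell}c$ (after rescaling the normalization of $I$-adic order to $q$-adic valuation on the specialized point, with the extra care that $\lambda_k$ is a multiple of $\ell$ when $k = nd, nd+1$), while part (2) combined with the defining equality $\val_q(\gothS(\underline t)) = \ell c$ of $\calW^\adm$ and the $p$-adic unit $u_k$ gives the \emph{equality} $\val_q(w_{nd}(\underline t)) = \frac{\lambda_{nd}}{\ell}c = \frac{an(nd-1)(p-1)}{2\ell}c$ and likewise at $k = nd+1$. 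Thus the $q$-adic Newton polygon of $C_f^*(\underline t, s)$ passes through the vertices $\big(nd, \tfrac{an(nd-1)(p-1)}{2\ell}c\big)$ and $\big(nd+1, \tfrac{an(nd+1-\text{\dots})(p-1)}{2\ell}c\big)$ for every $n$, and these vertices lie strictly below (or on) the polygon elsewhere. The slope of the segment from abscissa $nd$ to $nd+1$ is then $\frac{an(p-1)}{\ell}c = \frac{a(p-1)}{\ell}\cdot n \cdot \val_q(t)$-scaled appropriately, i.e.\ equals $\frac{a(p-1)}{\ell}\cdot n$ after dividing by $\val_q(\gothS(\underline t))$; and the $d-1$ slopes strictly between abscissae $nd+1$ and $(n+1)d$ lie strictly inside the open interval $\big(\frac{a(p-1)}{\ell}n,\frac{a(p-1)}{\ell}(n+1)\big)$ (relative to $\val_q(\gothS)$). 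This is precisely the statement that the normalized slopes of $C_f^*$, hence those of $L_f^* = C_f^*(\underline t,s)/C_f^*(\underline t,qs)$, are grouped into the blocks indexed by $J = [n,n]$ and $J = (n,n+1)$.

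\textbf{Carrying it out.} First, I would recall the construction of the Artin--Schreier--Witt eigenvariety $\calE_f$ from Section~\ref{Sec:ASW eigenvarieties}: it is cut out inside $\GG_m^\rig \times (\calW - \{\underline 0\})$ by $C_f^*(\underline T, s)$, and $\wt^{-1}(\calW^\adm) = \calE_f^\adm$. Cover $\calW^\adm$ by an increasing union of affinoid subdomains $\calW_r^\adm$ (for instance the loci where $\val_q(\gothS(\underline t)) = \ell c$ \emph{and} $r^{-1} \le c$, suitably rationalized); on each such affinoid the function $\gothS$ has constant valuation type and $c$ is bounded, so the estimates above are uniform. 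On $\calW_r^\adm$, apply the standard theory of Newton polygons for characteristic power series over an affinoid algebra (Lazard / Coleman's Riesz theory, or the setup already used in \cite{liu-wan-xiao}): because the $q$-adic Newton polygon of $C_f^*(\underline t, s)$ has a genuine vertex at each abscissa $nd$ and $nd+1$ uniformly over $\calW_r^\adm$, the power series factors uniquely as $C_f^* = \prod_{J} C_{f,J}^*$ where the product runs over the intervals $J$, each $C_{f,J}^*$ is a polynomial (times the tail) whose slopes all lie in $J$ (rescaled by $\val_q(\gothS)$), and the factorization is compatible with restriction to smaller affinoids. Define $X_J$ to be the zero locus of $C_{f,J}^*$; gluing over $r$ gives the decomposition $\calE_f^\adm = \coprod_J X_J$. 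Finiteness and flatness of $\wt: X_J \to \calW^\adm$ follows because $C_{f,J}^*$ is, up to the invertible tail factor, a polynomial in $s^{-1}$ of constant degree over each $\calW_r^\adm$ with unit leading coefficient (the Weierstrass-preparation / Fredholm-module argument of \cite[\S4]{Davis-wan-xiao}). The slope bound $\val_q(a_p(x))/\val_q(\gothS(\wt(x))) \in \frac{a(p-1)}{\ell}J$ is then immediate from the slope assignment in the factorization.

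\textbf{The main obstacle.} The crux is bookkeeping the two normalizations carefully: Theorem~\ref{mainforC} is stated in terms of $I$-adic order of $w_k(\underline T) \in \ZZ_p\llbracket \underline T\rrbracket$, whereas the eigenvariety decomposition needs the $q$-adic valuation of the \emph{specialized} coefficients $w_k(\underline t)$ together with the sharp identification of which abscissae are vertices. The passage from (1) to a lower bound on $\val_q(w_k(\underline t))$ is not automatic — $w_k \in I^{\lceil\lambda_k\rceil}$ only gives $\val_q(w_k(\underline t)) \ge \lceil \lambda_k\rceil \cdot c$, which must be reconciled with the claim that the vertex height is $\frac{\lambda_k}{\ell}\cdot(\ell c) = \lambda_k c$; here one uses that at $k = nd, nd+1$ one has $\ell \mid \lambda_k$ and that part (2) pins the valuation \emph{exactly} to $\frac{\lambda_k}{\ell}\cdot \val_q(\gothS(\underline t))$, which on $\calW^\adm$ equals $\lambda_k c$ — so the lower bound from (1) is matched. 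One must also verify that the intermediate coefficients ($nd+1 < k < (n+1)d$) cannot accidentally produce a vertex forcing a finer decomposition, which is where the \emph{strict} inequalities $\frac{i-1}{p^{m_\chi-1}} < \alpha_{ij} < \frac{i}{p^{m_\chi-1}}$ of Theorem~\ref{mainforL}(iii) (in the family/universal incarnation) are used; making this rigorous over a varying affinoid, rather than at a single finite character, requires the uniform lower bound of part (1) to be \emph{strict} on the relevant range, i.e.\ $\lceil \lambda_k \rceil > \lambda_{nd} + (k - nd)\cdot(\text{slope of the }nd\text{-to-}(n+1)d\text{ segment})$, which is a clean but essential numerical check using $p \nmid d$. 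Beyond this the argument is the routine gluing-of-Fredholm-factorizations machinery and presents no new difficulty.
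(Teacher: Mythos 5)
Your proposal is correct and follows essentially the same route as the paper: reduce to the fiberwise Newton-polygon statement that, for every point of $\calW^\adm$, the polygon of $C_f^*(\underline t,s)$ has vertices at abscissae $nd$ and $nd+1$ of the exact heights forced by Theorem~\ref{mainforC} together with $\val_q(\gothS(\underline t))=\ell\cdot\min_j\val_q(t_j)$, then cut $\calE_f^\adm$ into the pieces $X_{[n,n]}$ and $X_{(n,n+1)}$ over affinoid subdomains and get finiteness, flatness, and the slope bounds from the resulting factorization. The paper's proof (Theorem~\ref{T:eigencurve theorem}) is exactly this sketch, defining the $X_J$ directly by valuation conditions and citing the standard Fredholm/Riesz gluing you spell out; your only slip is the stray bound $\lceil\lambda_k/\ell\rceil\cdot c$ early on, which you yourself correct to $\lceil\lambda_k\rceil\cdot c$ later.
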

This is Theorem~\ref{T:eigencurve theorem}.

\begin{remark}
One may interpret Theorem~\ref{T:eigenvariety decomposition} as the pull-back of the following diagram:
\[
\xymatrix{
\textrm{decomposition pattern of }\calE_f^\adm \ar[r] \ar[d] & \textrm{decomposition pattern of }\calE_{f, \ZZ_p}
\ar[d]
\\
\calW^\mathrm{adm} \ar[r]^{\underline T \longmapsto T = \gothS(\underline T)}
&
\calW_{\ZZ_p}-\{0\},
}
\]
where the right hand side is the corresponding theorem (\cite[Theorem~4.2]{Davis-wan-xiao}) for the case $\ell=1$.
\end{remark}

\subsection*{Roadmap of the paper}
In Section~\ref{Sec:weight space}, we give several basic facts regarding the polynomial $\gothS(\underline T)$,
and show that Theorems~\ref{mainforL} and \ref{coincide} follow from Theorem~\ref{mainforC}.
Starting from Section~\ref{section 2}, we use another set of variables $\underline \pi$ instead of $\underline T$. We define the characteristic power series $C_f^*(\underline \pi,s)$ in Section~\ref{section 2}, and give a lower bound for its $I$-adic Newton polygon in Section~\ref{Sec:Hodge bound}.
Section~\ref{Sec:main proof} is devoted to the proof of Theorem~\ref{mainforC}, by showing that its validity is independent of $\ell$ and hence reduce to the known case $\ell=1$.
Section~\ref{Sec:ASW eigenvarieties} interprets everything in the language of eigenvarieties.  In the appendix, we include several errata for the paper \cite{Davis-wan-xiao}.

\subsection*{Acknowledgments}
We thank John Bergdall, C. Douglas Haessig, Kiran Kedlaya, and Hui June Zhu for their interests in the paper and interesting discussions. We particularly thank the anonymous referee for many useful comments to improve the presentation of the paper.
	
\section{Weight space}
\label{Sec:weight space}

We collect some basic facts regarding the weight space and characters of $\ZZ_{p^\ell}$.
\begin{lemma}
\label{L:independence}
\emph{(1)} The ideal $I = (T_1, \dots, T_\ell) \subseteq \ZZ_p \llbracket \underline T \rrbracket \cong \ZZ_p \llbracket \ZZ_{p^\ell} \rrbracket$ is canonically independent of the choice of the basis $\{  c_1, \dots,  c_\ell\}$.

\emph{(2)} The polynomial $\gothS(\underline T) \bmod pI^\ell + I^{\ell+1}$ is independent of the choice of the basis $\{  c_1, \dots,  c_\ell\}$.
\end{lemma}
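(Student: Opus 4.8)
The plan is to pass everything to the completed group algebra $R := \ZZ_p\llbracket \ZZ_{p^\ell}\rrbracket$, where both assertions become intrinsic. Recall that the (basis-dependent) isomorphism $\iota\colon R \to \ZZ_p\llbracket \underline T\rrbracket$ is the continuous $\ZZ_p$-algebra map extending the universal character, so that $\iota([x]) = \prod_{j=1}^\ell(1+T_j)^{\Tr_{\QQ_{p^\ell}/\QQ_p}(xc_j)}$; in particular $\iota([c_i^*]) = 1+T_i$ because $\Tr_{\QQ_{p^\ell}/\QQ_p}(c_i^*c_j) = \delta_{ij}$. What has to be shown is that, running the same construction with a second basis $\{c_1',\dots,c_\ell'\}$ to produce data $\underline T'$, $\iota'$, $\gothS'$, one has $\iota^{-1}(I) = (\iota')^{-1}(I')$ inside $R$, and that the classes of $\gothS$ and $\gothS'$ agree modulo $pI^\ell + I^{\ell+1}$ under $\iota$ and $\iota'$.

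\textbf{For part (1):} composing $\iota$ with the evaluation $\mathrm{ev}\colon \ZZ_p\llbracket\underline T\rrbracket \to \ZZ_p$ at $\underline T = \underline 0$ sends every $[x]$ to $\prod_j 1^{\Tr(xc_j)} = 1$; that is, $\mathrm{ev}\circ\iota$ equals the augmentation map $\varepsilon\colon R \to \ZZ_p$, which makes no reference to the basis. Since $I = \ker(\mathrm{ev})$, we get $\iota^{-1}(I) = \ker(\varepsilon)$, the augmentation ideal of $R$, which is canonical.

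\textbf{For part (2):} I would first rewrite $\gothS$. As $\sigma$ fixes the formal variables, $\sum_j \sigma^i(c_j)T_j = \sigma^i(\ell_0)$ with $\ell_0 := \sum_{j=1}^\ell c_j T_j \in \ZZ_{p^\ell}\llbracket\underline T\rrbracket$; and since $\sigma$ has order $\ell$ on $\ZZ_{p^\ell}$, the set $\{\sigma^i : 1 \le i \le \ell\}$ is all of $\Gal(\QQ_{p^\ell}/\QQ_p)$, so $\gothS = \prod_{i=0}^{\ell-1}\sigma^i(\ell_0) = N_{\QQ_{p^\ell}/\QQ_p}(\ell_0)$, the norm taken coefficientwise. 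Now let $A = (A_{jk}) \in \GL_\ell(\ZZ_p)$ be the transition matrix, $c_i' = \sum_k A_{ik}c_k$, and set $\phi := \iota'\circ\iota^{-1}\colon \ZZ_p\llbracket\underline T\rrbracket \to \ZZ_p\llbracket\underline T'\rrbracket$. Using $\iota^{-1}(1+T_i) = [c_i^*]$ and $\Tr_{\QQ_{p^\ell}/\QQ_p}(c_i^*c'_j) = A_{ji}$, one finds $\phi(1+T_i) = \iota'([c_i^*]) = \prod_j(1+T'_j)^{A_{ji}}$, hence $\phi(T_i) \equiv \sum_j A_{ji}T'_j \pmod{(I')^2}$. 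Extending $\phi$ $\ZZ_{p^\ell}$-linearly (it commutes with $\sigma$, since it fixes $\ZZ_{p^\ell}$ and sends monomials in $\underline T$ to power series in $\underline T'$), this gives $\phi(\ell_0) = \sum_i c_i\phi(T_i) \equiv \sum_j\bigl(\sum_i A_{ji}c_i\bigr)T'_j = \sum_j c'_j T'_j = \ell_0' \pmod{(I')^2}$, where I used $\sum_i A_{ji}c_i = c'_j$. Applying $\sigma^i$ and multiplying, all of which respect the $I'$-adic filtration, yields $\phi(\gothS) = \prod_{i=0}^{\ell-1}\sigma^i(\phi(\ell_0)) \equiv \prod_{i=0}^{\ell-1}\sigma^i(\ell_0') = \gothS' \pmod{(I')^{\ell+1}}$. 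So $\gothS$ and $\gothS'$ have the same image in $I^\ell/I^{\ell+1}$, hence also in $I^\ell/(pI^\ell+I^{\ell+1})$; combined with part (1) this is the asserted independence.

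I do not expect a real obstacle here: the argument reduces to the elementary computation of $\phi$ on $I/I^2$ and the identity $\sum_i A_{ji}c_i = c'_j$ relating the transition matrix to the dual bases. The only points that need care are keeping straight the pairing $\Tr_{\QQ_{p^\ell}/\QQ_p}(c_i^*c'_j)$ between the old dual basis and the new basis, and checking that $\phi$ preserves the $I$-adic filtration and commutes with $\sigma$ so that passing to the top-degree part of the norm is legitimate. As a byproduct the argument even shows that $\gothS \bmod I^{\ell+1}$, not just $\gothS \bmod (pI^\ell+I^{\ell+1})$, is independent of the basis.
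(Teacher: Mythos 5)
Your argument is correct, and part (1) is exactly the paper's proof (identify $I$ with the augmentation ideal, i.e.\ the kernel of $\ZZ_p\llbracket \ZZ_{p^\ell}\rrbracket \twoheadrightarrow \ZZ_p$). For part (2) you take a somewhat different route from the paper: the paper reduces to the three elementary types of generators of $\GL_\ell(\ZZ_p)$ (swaps, scalings, transvections) and checks in each case a congruence for the linear form $\sum_j c_jT_j$ modulo $pI+I^2$, whereas you treat an arbitrary transition matrix $A$ in one stroke, using the dual basis to get the exact formula $\phi(1+T_i)=\prod_j(1+T'_j)^{A_{ji}}$, reading off the induced map on $I/I^2$, and then promoting the degree-one statement to $\gothS$ by viewing $\gothS$ as the product of the Galois conjugates (norm) of $\sum_j c_jT_j$ and multiplying congruences. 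Both proofs rest on the same key point — basis-independence of the class of $\sum_j c_jT_j$ modulo $(I)^2$ over $\ZZ_{p^\ell}$ — but yours avoids the generator case-check, is uniform in $A$, and yields the marginally stronger conclusion that $\gothS \bmod I^{\ell+1}$ is already basis-independent (the $pI^\ell$ slack in the statement is not needed, since binomial coefficients $\binom{u}{k}$ of $p$-adic integers lie in $\ZZ_p$; in fact the paper's own three congruences also hold modulo $I^2$). The only step you leave implicit is the harmless descent at the end: the congruence is first obtained in $\ZZ_{p^\ell}\llbracket \underline T'\rrbracket$, and since both $\phi(\gothS)$ and $\gothS'$ have coefficients in $\ZZ_p$ and $\ZZ_{p^\ell}\llbracket \underline T'\rrbracket$ is free over $\ZZ_p\llbracket \underline T'\rrbracket$, intersecting with $\ZZ_p\llbracket \underline T'\rrbracket$ returns the congruence modulo $(I')^{\ell+1}$ there; it would be worth one sentence to record this.
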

\begin{proof}
(1) Note that a change of basis of $\ZZ_{p^\ell}$ over $\ZZ_p$ results in a change of variables of $\{T_1,\dots, T_\ell\}$ in a way that $\chi_{\mathrm{univ}}$ is well-defined. In fact, $I$ is the augmentation ideal, or equivalently the kernel of $\ZZ_p\llbracket \ZZ_{p^\ell}\rrbracket \twoheadrightarrow \ZZ_p$. So it is canonically independent of the choice of the basis.

(2)
The group of all possible change of basis matrices $\GL_\ell(\ZZ_p)$ is generated by the following three types:
\begin{itemize}
\item[(a)] only swapping $ c_i$ with $ c_j$;
\item[(b)] for a unique fixed $i$, scaling $ c_i$ to $ u_i c_i$ for $ u_i \in \ZZ_p^\times$;
\item[(c)] only changing $ c_1$ to $ c_1+  u c_2$ for $ u \in \ZZ_p$.
\end{itemize}
It suffices to check the invariance of $\gothS(\underline T) \bmod pI^\ell + I^{\ell+1}$ under these three changes of basis.
Case (a) will result in swapping $T_i$ with $T_j$. The invariance of $\gothS(\underline T)$ follows from the definition.
Case (b) will result in changing $T_i$ to $(1+T_i)^{ u_i^{-1} }-1$. The invariance of $\gothS(\underline T) \bmod pI^\ell+I^{\ell+1}$ of this change of basis follows from the congruence
\[
c_iT_i \equiv u_i c_i \big( (1+T_i)^{ u_i^{-1}} -1 \big) \quad \bmod pI + I^2.
\]
Case (c) will result in changing $T_2$ to $(1+T_2)(1+T_1)^{-u}-1$, and keeping all the other variables unchanged.
Then the invariance of $\gothS(\underline T) \bmod pI^\ell+ I^{\ell+1}$ of $\gothS(\underline T)$ follows from the congruence
\[
c_1T_1 + c_2T_2 \equiv (c_1+uc_2) T_1 + c_2 \big((1+T_2)(1+T_1)^{-u}-1\big)\quad  \bmod pI + I^2. \qedhere
\]
\end{proof}

\begin{remark}
\label{R:independence}
In view of Lemma~\ref{L:independence}, the validity of Theorem~\ref{mainforC} is independent of $\{c_1, \dots, c_\ell\}$; so it suffices to prove it for a particular choice of basis $\{c_1, \dots, c_\ell\}$.
\end{remark}

\subsection{Weight space}
\label{S:weight space}
Using the variables $T_1, \dots, T_\ell$, we can explicitly present the weight space as
\[
\calW : = \Max\big(\ZZ_p\llbracket \ZZ_{p^\ell} \rrbracket[\tfrac 1p]\big)
= \big\{ (t_1, \dots, t_\ell) \in \CC_p \; \big|\; \val_q(t_j) > 0 \textrm{ for all }j\big\}.
\]
It is the rigid analytic space associated to the formal scheme $\ZZ_p\llbracket \underline T \rrbracket$ \`a la Raynaud (see e.g. \cite[page 133--134]{mumford} or \cite[\S 0.2]{berthelot}). Explicitly, it is the increasing union of affinoids given by closed polydisks of radius $r$ approaching $1$.
The complement $\calW -\{0\}$ is understood as the rigid analytic space given by the increasing union of polyannuli
\[
\big\{ (t_1, \dots, t_\ell) \in \CC_p \; \big|\; r< \val_q(t_j) <s \textrm{ for all }j \big\}
\]
with $r,s \in \QQ$, $r$ approaching $0^+$, and $s$ approaching $\infty$.

Since $\gothS(\underline T)$ is a homogeneous polynomial of degree $\ell$, we have
\[
\val_q(\gothS(\underline t)) \geq \ell \cdot \min \big\{ \val_q(t_1), \dots, \val_q(t_\ell)\big\}.
\]
Our theory will apply to the case when the above inequality is an equality, namely over the \emph{admissible locus}
\[
\calW^\adm: = \big\{(t_1, \dots, t_\ell) \in \calW - \{\underline 0\} \; \big| \;\val_q(\gothS(\underline t)) = \ell \cdot \min\{\val_q(t_1), \dots, \val_q(t_\ell)\}
\big\}.
\]
This is a rigid analytic subspace of $\calW$. Over each affinoid subdomain $U=\Max(A)$ of $\calW$, $U \cap \calW^\adm$ is the affinoid subdomain given by
\[
\Max \Big( A \big\langle\tfrac{T_1^\ell}{\gothS(\underline T)},\dots, \tfrac{T_\ell^\ell}{\gothS(\underline T)} \big\rangle \Big).
\]

\begin{corollary}
\label{C:admissible locus independent}
The admissible locus $\calW^\adm \subset \calW$ is independent of the choice of the basis $\{c_1, \dots, c_\ell\}$.
\end{corollary}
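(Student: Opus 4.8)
The plan is to deduce this directly from Lemma~\ref{L:independence} by rewriting the two ingredients in the definition of $\calW^\adm$ in a manifestly basis-independent form. Since $\calW^\adm$ is an increasing union of affinoid subdomains of $\calW-\{\underline 0\}$ and such a subspace is determined by its set of $\CC_p$-points, it suffices to show that the underlying point set of $\calW^\adm$ is unchanged when $\{c_1,\dots,c_\ell\}$ is replaced by another $\ZZ_p$-basis of $\ZZ_{p^\ell}$.

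Fix a point $\underline t=(t_1,\dots,t_\ell)$ of $\calW-\{\underline 0\}$. Evaluation at $\underline t$ gives a ring homomorphism $\theta\colon \ZZ_p\llbracket \ZZ_{p^\ell}\rrbracket\to\CC_p$ via the canonical identification $\ZZ_p\llbracket\ZZ_{p^\ell}\rrbracket\cong\ZZ_p\llbracket\underline T\rrbracket$ (under which $T_j\mapsto t_j$); through this identification $\theta$ is intrinsic to $\underline t$, and only the coordinates $t_j$ depend on the basis. First I would check that
\[
v\ :=\ \min\{\val_q(t_1),\dots,\val_q(t_\ell)\}\ =\ \min\{\val_q(\theta(g))\ :\ g\in I\},
\]
which holds because every $g\in I$ is a $\ZZ_p\llbracket\underline T\rrbracket$-linear combination of $T_1,\dots,T_\ell$ (so $\val_q(\theta(g))\geq v$), with equality at $g=T_{j_0}$ for an index $j_0$ realizing the minimum; since $I$ is basis-independent by Lemma~\ref{L:independence}(1), so is $v$. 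Applying $\theta$ to products of elements of $I$ gives $\theta(I^k)\subseteq\{x\in\CC_p:\val_q(x)\geq kv\}$, so for any $h\in I^\ell$ and $g'\in I^{\ell+1}$,
\[
\val_q\big(\theta(ph+g')\big)\ \geq\ \min\!\big(\tfrac1a+\ell v,\ (\ell+1)v\big)\ >\ \ell v.
\]
Because $\gothS(\underline T)$ is homogeneous of degree $\ell$ it lies in $I^\ell$, so $\val_q(\theta(\gothS))\geq\ell v$ always, and membership of $\underline t$ in $\calW^\adm$ amounts exactly to the equality $\val_q(\theta(\gothS))=\ell v$.

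The corollary then falls out. Writing $\gothS'(\underline T)$ for the polynomial attached to the second basis, Lemma~\ref{L:independence}(2) gives $\gothS-\gothS'\in pI^\ell+I^{\ell+1}$, hence $\val_q(\theta(\gothS-\gothS'))>\ell v$ by the previous display; combined with $\val_q(\theta(\gothS)),\val_q(\theta(\gothS'))\geq\ell v$, this forces
\[
\val_q(\theta(\gothS))=\ell v\quad\Longleftrightarrow\quad\val_q(\theta(\gothS'))=\ell v .
\]
So $\underline t$ lies in the admissible locus computed with the first basis exactly when it lies in the one computed with the second, and the two point sets coincide.

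The step I expect to need the most care is the identification $v=\min\{\val_q(\theta(g)):g\in I\}$ together with the inclusion $\theta(I^k)\subseteq\{x:\val_q(x)\geq kv\}$: one must be careful that the minimum is genuinely attained and that $\theta$ behaves well on powers of the augmentation ideal. Everything after that is a two-line valuation comparison. A more hands-on alternative would be to verify directly that each of the three elementary changes of basis used in the proof of Lemma~\ref{L:independence}(2) preserves both $\min_j\val_q(t_j)$ — using $\binom{c}{k}\in\ZZ_p$ for $c\in\ZZ_p$, so that $\val_q\big((1+t)^c-1\big)=\val_q(t)$ whenever $c\in\ZZ_p^\times$ and $\val_q(t)>0$ — and the vanishing locus of $\gothS\bmod p$; but the intrinsic argument above is cleaner and avoids tracking the nonlinear coordinate changes.
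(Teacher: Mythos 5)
Your proof is correct and follows the same route as the paper, which simply cites Lemma~\ref{L:independence}(2) together with the definition of $\calW^\adm$; you have merely spelled out the implicit details (the basis-independence of $v=\min_j\val_q(t_j)$ via Lemma~\ref{L:independence}(1) and the fact that evaluating $pI^\ell+I^{\ell+1}$ at a point of $\calW-\{\underline 0\}$ yields valuation strictly greater than $\ell v$, so the equality $\val_q(\gothS(\underline t))=\ell v$ is unaffected by the change of basis). These details are accurate, so your write-up is a valid elaboration of the paper's one-line argument.
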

\begin{proof}
This follows from Lemma~\ref{L:independence}(2) and the definition of the admissible locus.
\end{proof}

\begin{lemma}
\label{L:admissible locus contains finite characters} The coordinate of a continuous character $\chi: \ZZ_{p^\ell} \to \CC_p^\times$  on the weight space is given by $t_{j, \chi}: = \chi(c_j^*) - 1 $ for $j=1, \dots, \ell$. When $\chi$ is a \emph{finite and non-trivial} character, the corresponding point lies on the admissible locus $\calW^\adm$.
\end{lemma}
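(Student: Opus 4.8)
The first assertion just restates the recovery formula for characters from $\chi_{\mathrm{univ}}$ recalled in the introduction, so all the content is in the second one. The plan is to compute $\val_q\big(\gothS(\underline t_\chi)\big)$ directly, factor by factor, using the product expression \eqref{E:gothS}, namely $\gothS(\underline T) = \prod_{i=1}^\ell\big(\sum_{j=1}^\ell \sigma^i(c_j)T_j\big)$.

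First I would fix notation. Since $\chi$ is finite and non-trivial, it has some conductor $m := m_\chi \geq 1$ and its image is $\mu_{p^m}$; I choose a primitive $p^m$-th root of unity $\zeta$, set $\pi := \zeta - 1$ (so $\val_q(\pi) = \tfrac{1}{ap^{m-1}(p-1)} > 0$), and write $\chi(c_j^*) = \zeta^{n_j}$ with $n_j \in \ZZ/p^m\ZZ$, so that $t_{j,\chi} = \zeta^{n_j} - 1$. Because $\{c_1^*, \dots, c_\ell^*\}$ is a $\ZZ_p$-basis of $\ZZ_{p^\ell}$ and $\chi$ surjects onto $\mu_{p^m}$, the values $n_1, \dots, n_\ell$ are not all divisible by $p$; hence $\val_q(t_{j,\chi}) \geq \val_q(\pi)$ for every $j$, with equality exactly for those $j$ with $p\nmid n_j$. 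In particular $\underline t_\chi \in \calW - \{\underline 0\}$ and $\min_j \val_q(t_{j,\chi}) = \val_q(\pi)$.

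The main step is to show $\val_q(L_i) = \val_q(\pi)$ for each linear factor $L_i := \sum_{j=1}^\ell \sigma^i(c_j)\,t_{j,\chi}$. Picking integer lifts $\tilde n_j$ of $n_j$ and expanding $\zeta^{n_j}-1 = (1+\pi)^{\tilde n_j} - 1 = \tilde n_j\pi + \binom{\tilde n_j}{2}\pi^2 + \cdots$, in which every term beyond the linear one has valuation $\geq 2\val_p(\pi)$ (and the answer is independent of the chosen lifts modulo such terms, since $m \geq \val_p(\pi)$), I get
\[
L_i = \pi\cdot\Big(\textstyle\sum_{j=1}^\ell \sigma^i(c_j)\,\tilde n_j\Big) + \big(\text{terms of } \val_p\text{-valuation} \geq 2\val_p(\pi)\big).
\]
So it remains to check that $\sum_j \sigma^i(c_j)\tilde n_j$ is a unit in $\ZZ_q$; reducing mod $p$ (recall $\sigma$ lifts the $p$-power Frobenius), this means $\sum_{j=1}^\ell \bar c_j^{\,p^i}\,\bar n_j \neq 0$ in $\FF_{p^\ell}$, where $\bar c_j = c_j \bmod p$ and $\bar n_j = n_j \bmod p$. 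The vector $(\bar n_1, \dots, \bar n_\ell) \in \FF_p^\ell$ is nonzero by the previous paragraph, so it is enough to know that the $\ell\times\ell$ matrix $\big(\bar c_j^{\,p^i}\big)_{1\le i,j\le\ell}$ is invertible over $\FF_{p^\ell}$. This holds because $\sigma$ has order $\ell$ on $\FF_{p^\ell}$, so the rows of this matrix are a permutation of those of the embedding matrix $\big(\bar c_j^{\,p^i}\big)_{0\le i\le\ell-1,\ 1\le j\le\ell}$, and the square of the determinant of the latter is the discriminant of the $\FF_p$-basis $\{\bar c_1, \dots, \bar c_\ell\}$ of $\FF_{p^\ell}$, which is nonzero by separability of $\FF_{p^\ell}/\FF_p$. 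Multiplying the resulting equalities $\val_q(L_i) = \val_q(\pi)$ over $i = 1, \dots, \ell$ gives $\val_q(\gothS(\underline t_\chi)) = \ell\,\val_q(\pi) = \ell\cdot\min_j\val_q(t_{j,\chi})$, which is precisely the defining condition of $\calW^\adm$.

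I expect the invertibility of $\big(\bar c_j^{\,p^i}\big)_{i,j}$ — equivalently, the non-vanishing mod $p$ of each $\sum_j \sigma^i(c_j)n_j$ — to be the only point that is not pure bookkeeping; the rest of the argument is elementary valuation estimates for $p$-power roots of unity, where the only edge case worth rechecking is $p=2$, $m=1$ (so that $\val_p(\pi) = 1$).
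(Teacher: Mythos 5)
Your strategy is workable, but the justification at the crucial step is not valid as written. You need, for \emph{every} $i$, that $\sum_{j}\bar c_j^{\,p^i}\bar n_j \neq 0$ in $\FF_{p^\ell}$; invertibility of the matrix $M=(\bar c_j^{\,p^i})_{i,j}$ only tells you that the vector $M\bar n$ is nonzero, i.e.\ that the nonvanishing holds for \emph{some} $i$ (the identity matrix together with $\bar n=(1,0,\dots,0)$ shows that ``invertible matrix plus nonzero vector'' does not force every row pairing to be nonzero). This is exactly the point that must be secured: if even one factor $L_i$ had valuation strictly larger than $\val_q(\pi)$, your product formula would give $\val_q(\gothS(t_{1,\chi},\dots,t_{\ell,\chi}))>\ell\cdot\val_q(\pi)$ and the admissibility condition would fail. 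The repair is immediate and needs less than your discriminant/Moore-matrix argument: for each fixed $i$, the map $x\mapsto x^{p^i}$ is an $\FF_p$-automorphism of $\FF_{p^\ell}$, so $\{\bar c_1^{\,p^i},\dots,\bar c_\ell^{\,p^i}\}$ is again an $\FF_p$-basis; since the $\bar n_j$ lie in $\FF_p$ and are not all zero, each sum $\sum_j \bar c_j^{\,p^i}\bar n_j$ is nonzero. Equivalently, because $\bar n_j\in\FF_p$ these $\ell$ quantities are the Galois conjugates $\sigma^i\big(\sum_j \bar c_j\bar n_j\big)$ of a single element, so they vanish simultaneously or not at all, and your invertibility argument does rule out simultaneous vanishing.

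With that step repaired, your proof is an explicit, coordinate-level version of the paper's argument. The paper observes that $c_1,\dots,c_\ell$ form an orthonormal basis of $\ZZ_{p^\ell}[\zeta_{p^{m_\chi}}]$ over $\ZZ_p[\zeta_{p^{m_\chi}}]$, whence $\val_q\big(\sum_j c_j t_{j,\chi}\big)=\min_j \val_q(t_{j,\chi})$, and then takes the norm down to $\ZZ_p[\zeta_{p^{m_\chi}}]$; since the $t_{j,\chi}$ are fixed by $\sigma$, this norm is exactly $\gothS(t_{1,\chi},\dots,t_{\ell,\chi})$, giving $\val_q(\gothS)=\ell\cdot\min_j\val_q(t_{j,\chi})$ without expanding in powers of $\zeta-1$ or computing the minimum explicitly. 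Your factors $L_i$ are precisely the Galois conjugates of $\sum_j c_j t_{j,\chi}$, so the skeleton is the same; the explicit route costs you the bookkeeping with the lifts $\tilde n_j$ and the mod-$p$ nonvanishing discussion above, which the orthonormal-basis formulation packages away.
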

\begin{proof}
The coordinate of $\chi$ is clearly as given. Let $m_\chi$ denote the conductor of $\chi$, so that the image of $\chi$ lies in $\ZZ_p[\zeta_{p^{m_\chi}}]$. In particular, each $t_{j,\chi} \in \ZZ_p[\zeta_{p^{m_\chi}}]$.

Note that $c_1, \dots, c_\ell$ form a basis of $\ZZ_{p^\ell}$ over $\ZZ_p$. So they also form an \emph{orthonormal basis} of $\ZZ_{p^\ell}[\zeta_{p^{m_\chi}}]$ over $\ZZ_p[\zeta_{p^{m_\chi}}]$. It then follows that
\[
\val_q \Big( \sum_{j=1}^\ell c_j t_{j,\chi} \Big) = \min \big\{ \val_q(t_{1, \chi}), \dots, \val_q(t_{\ell, \chi}) \big\}.
\]
Taking the norm from $\ZZ_{p^\ell}[\zeta_{p^{m_\chi}}]$ to $\ZZ_p[\zeta_{p^{m_\chi}}]$ shows that
\[
\val_q\big(\gothS(t_{1, \chi},\dots, t_{\ell, \chi})\big) = \ell \cdot \min \big\{\val_q(t_{1, \chi}), \dots, \val_q(t_{\ell, \chi})
\big\}.
\]
This means that the point corresponding to $\chi$ lies in $\calW^\adm$.
\end{proof}

\subsection{Proof of Theorem~\ref{mainforC} $\Rightarrow$ Theorems~\ref{mainforL} and \ref{coincide}}
\label{S:Thm 1.4=>1.1 and 1.3}
For a finite non-trivial character $\chi$ with coordinates $t_{j, \chi} = \chi(c_j^*)-1$, we know that
\begin{align*}
\min\big\{ \val_q(t_{1,\chi}) , \dots, \val_q(t_{\ell, \chi}) \big\} & = \min\big\{ \val_q(\chi(c_1^*)-1) , \dots, \val_q(\chi(c_\ell^*)-1) \big\} 
\\
&= \frac 1a \cdot \frac 1{p^{m_\chi-1}(p-1)}.
\end{align*}
Hence Theorem \ref{mainforC}(1) implies
\begin{equation}
\label{E:valuation of wk}
\val_q(w_k(\chi)) \geq \lambda_k \cdot  \frac 1a \cdot \frac 1{p^{m_\chi-1}(p-1)} = \frac{k(k-1)}{2dp^{m_\chi-1}}.
\end{equation}
Moreover, by Lemma~\ref{L:admissible locus contains finite characters}, the point corresponding to this finite character $\chi$ lies on the admissible locus $\calW^\adm$.
So Theorem~\ref{mainforC}(2) implies that the equality in \eqref{E:valuation of wk} holds for $k=nd$ or $nd+1$.

From this, we deduce that the $q$-adic Newton polygon of $C_f^*(\chi,s)$ lies above the polygon with vertices $\big(k, \frac{k(k-1)}{2dp^{m_\chi-1}}\big)$, and so it must pass through the points $\big(nd, \frac{n(nd-1)}{2p^{m_\chi-1}}\big)$ and $\big(nd+1, \frac{n(nd+1)}{2p^{m_\chi-1}}\big)$ given by $(k, \val_q(w_k(\chi)))$ for $k=nd$ and $nd+1$ with $n \in \ZZ_{\geq 0}$.
This completes the proof of Theorem~\ref{coincide}.

For Theorem~\ref{mainforL}, we observe that  $L^*_f(\chi, s) = \frac{C^*_f(\chi,s)}{C^*_f(\chi,qs)}$ is a polynomial of degree $dp^{m_\chi-1}$, and the set 
$\big\{\alpha\in \CC_p\;|\;\alpha^{-1}\ \mathrm{is\ a\ root\ of}\ L^*_f(\chi, s)=0\big\}$ is the same as the set 
$$\big\{\beta\in \CC_p\;|\;\beta^{-1}\ \mathrm{is\ a\ root\ of}\ C^*_f(\chi, s)=0 \textrm{ and }\val_q(\beta)\in [0,1)\big\}.$$
So Theorem~\ref{mainforL} follows from Theorem~\ref{coincide} directly as $L_f(\chi, s)$ is obtained from $L_f^*(\chi, s)$ by removing its 
unique linear factor with slope zero. \hfill $\Box $

\begin{remark}
\label{R:all cont char okay}
The same argument proves the analog of Theorem~\ref{coincide} for all continuous characters $\chi$ of $\ZZ_{p^\ell}$ whose corresponding points on the weight space lie in $\calW^\adm$.
\end{remark}

\section{$I$-adic exponential sums}\label{section 2}

We fix the polynomial $\bar f$ and its Teichm\"uller lift $f$ as in the introduction.

\begin{notation}
We first recall that the \emph{Artin--Hasse exponential series} is defined by
\begin{equation}\label{Artin-Hasse}
	E(\pi) = \exp\big( \sum_{i=0}^\infty \frac{\pi^{p^i}}{p^i} \big) = \prod\limits_{p \nmid i,\ i \geq 1} \big( 1-\pi^i\big)^{-\mu(i)/i} \in 1+ \pi + \pi^2 \ZZ_p[\![ \pi ]\!].
\end{equation}
Setting $T = E(\pi) -1$ defines an isomorphism $\ZZ_p\llbracket \pi \rrbracket \cong \ZZ_p\llbracket T\rrbracket$.

For the rest of the paper, it will be more convenient to set $T_j = E(\pi_j) -1$ for each $j$ and use $\pi_1, \dots, \pi_\ell$ as the parameters for the ring $\ZZ_p\llbracket \underline T\rrbracket \cong \ZZ_p\llbracket \underline \pi \rrbracket $.
In particular, we have
\[
I = (T_1, \dots, T_\ell) = (\pi_1, \dots, \pi_\ell).
\]
\end{notation}

\begin{definition}	
For a positive integer $k$, the \emph{I-adic exponential sum} of $f$ over $\FF_{q^k}^\times$ is\footnote{This sum agrees with $S_f(k, T)$ in \cite{liu-wan} (in the one-dimensional case).}
\[
S^*(k, \underline{\pi}): = \sum_{x \in \FF_{q^k}^\times} \prod\limits_{j=1}^\ell E(\pi_j)^{\Tr_{\QQ_{q^k} / \QQ_p}[c_jf(\omega( x))]} \in \ZZ_p[\![ \underline{\pi}]\!].
\]
Note that the sum is taken over $\FF_{q^k}^\times$. 
The superscript $*$ reminds us that we are working over the torus $\GG_m$. 
We define the \emph{$I$-adic characteristic power series associated to $f$} to be\footnote{Our $C_f^*(\underline{\pi},s)$ agrees with the $C_f(T,s)$ in \cite{liu-wan} (in the one-dimensional case); we will not introduce a version $C_f(T,s)$ without the star since it will not be used in our proof.}
\begin{eqnarray}
\label{E:Cfstar}
C_f^*(\underline{\pi},s) &:= &\exp \Big( \sum_{k=1}^\infty \frac 1{1-q^k} S^*(k,\underline{\pi})\frac{s^k}{k} \Big) 
\\
\nonumber
&=& \displaystyle \sum_{k=0}^\infty w_k(\underline \pi) s^k \in \ZZ_p\llbracket \underline \pi, s\rrbracket.
\end{eqnarray}

The \emph{$I$-adic L-series of $f$} is defined by 
$$
L_f^*(\underline{\pi},s) = \exp \Big( \sum_{k=1}^\infty S^*(k,\underline{\pi})\frac{s^k} {k} \Big).$$
These two series determine each other, and are related by the formula 
$$C_f^*(\underline{\pi},s) = L_f^*(\underline{\pi},s)L_f^*(\underline{\pi},qs)L_f^*(\underline{\pi},q^2s) \cdots. $$
It is clear that for a finite character $\chi: \ZZ_{p^\ell} \to \CC_p^\times$, 
\[
L_f^*(\chi,s) =L_f^*(\underline{\pi},s)|_{E(\pi_j) = \chi(c_j^*) \textrm{ for all }j}, \ \ 
C_f^*(\chi,s) =C_f^*(\underline{\pi},s)|_{E(\pi_j) = \chi(c_j^*) \textrm{ for all }j}.
\]
Here the subscripts mean to evaluate the power series at $\pi_j \in \gothm_{\CC_p}$ for which $E(\pi_j) = \chi(c_j^*)$ (the elements $c_j^*$ are defined just before Theorem~\ref{mainforC}).

\end{definition}
\begin{hypothesis}
\label{H:teichmuller}
From now till the end of Section~\ref{Sec:main proof}, assume the chosen basis $\{c_1, \dots, c_\ell\}$ consists of Teichm\"uller lifts, i.e. $c_j = \omega(\bar c_j)$ for $j =1, \dots, \ell$.
\end{hypothesis}

\begin{notation}
For our given polynomial $f(x) = \sum\limits_{i=0}^d a_i x^i \in \ZZ_q[x]$, we put
\begin{equation}
\label{E:Ef(x)}
E_f(x)_\pi := \prod\limits_{i=0}^d E(a_i \pi x^i) \in \ZZ_q[\![\pi]\!] [\![ x ]\!].\end{equation}
So $E_{c_j f}(x)_{\pi_j}$ would mean $\prod\limits_{i=0}^d E(c_ja_i \pi_jx^i)$.
If $\sigma$ denotes the arithmetic $p$-Frobenius automorphism which acts naturally on $\QQ_q$, and  trivially on $\pi$ and $x$, then we have,  for every $j \in \ZZ_{\geq 0}$, 
\[
E_f^{\sigma^j}(x)_\pi = \prod\limits_{i=0}^d E(a_i^{\sigma^j} \pi x^i) \in \ZZ_q[\![\pi]\!] [\![ x ]\!].\]
\end{notation}

\begin{lemma}
\label{L:estimate of Ef(x)}
\emph{(1)} If we write $E_f(x)_\pi =  \sum\limits_{n=0}^\infty b_n(\pi)x^n \in \ZZ_q\llbracket \pi\rrbracket \llbracket x \rrbracket$, 
then $b_n(\pi) \in \pi^{\lceil n/d\rceil} \ZZ_q\llbracket \pi \rrbracket$.

\emph{(2)}
If we write 
$
\prod\limits_{j=1}^{\ell} E_{ c_jf}(x)_{\pi_j} =
\sum\limits_{n=0}^\infty e_n(\underline \pi) x^n \in \ZZ_q\llbracket \underline \pi \rrbracket \llbracket x \rrbracket,
$
then
$
e_n(\underline \pi) \in I^{\lceil n/d\rceil}
$ and $e_0=1$.
\end{lemma}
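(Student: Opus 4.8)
\emph{Proof strategy.} The plan is to prove both parts by expanding the Artin--Hasse exponentials explicitly and running an elementary degree count; the arguments are routine, and everything reduces to two inequalities for $\lceil\,\cdot\,\rceil$.

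First I would record the shape of the series involved. By \eqref{Artin-Hasse} we may write $E(\pi)=\sum_{k\geq 0}\lambda_k\pi^k$ with $\lambda_0=1$ and all $\lambda_k\in\ZZ_p$; substituting $\pi\mapsto a_i\pi x^i$ gives $E(a_i\pi x^i)=\sum_{k\geq 0}\lambda_k a_i^{k}\,\pi^{k}x^{ik}\in\ZZ_q\llbracket\pi\rrbracket\llbracket x\rrbracket$. Expanding the product $E_f(x)_\pi=\prod_{i=0}^d E(a_i\pi x^i)$, the coefficient $b_n(\pi)$ of $x^n$ becomes a $\ZZ_q$-linear combination of monomials $\pi^{\,k_0+\cdots+k_d}$, one for each tuple $(k_0,\dots,k_d)\in\ZZ_{\geq 0}^{d+1}$ with $\sum_{i=0}^d i k_i=n$. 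The key point is that each index satisfies $i\leq d$, so $n=\sum_i i k_i\leq d\sum_i k_i$; hence $\sum_i k_i\geq n/d$, and since $\sum_i k_i$ is an integer, $\sum_i k_i\geq\lceil n/d\rceil$. This yields $b_n(\pi)\in\pi^{\lceil n/d\rceil}\ZZ_q\llbracket\pi\rrbracket$, i.e.\ part~(1), and I would note that the argument uses only $\deg f\leq d$, not monicity, so it applies verbatim to each $c_jf$.

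For part~(2), I would apply (1) to the polynomial $c_jf$ (still of degree $d$) to obtain $E_{c_jf}(x)_{\pi_j}=\sum_n b_n^{(j)}(\pi_j)x^n$ with $b_n^{(j)}(\pi_j)\in\pi_j^{\lceil n/d\rceil}\ZZ_q\llbracket\pi_j\rrbracket$. Multiplying over $j=1,\dots,\ell$ gives $e_n(\underline\pi)=\sum_{n_1+\cdots+n_\ell=n}\prod_{j=1}^\ell b_{n_j}^{(j)}(\pi_j)$, and each summand lies in $\pi_1^{\lceil n_1/d\rceil}\cdots\pi_\ell^{\lceil n_\ell/d\rceil}\ZZ_q\llbracket\underline\pi\rrbracket\subseteq I^{\,\sum_j\lceil n_j/d\rceil}$. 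The second inequality, the subadditivity $\sum_{j}\lceil n_j/d\rceil\geq\lceil(n_1+\cdots+n_\ell)/d\rceil=\lceil n/d\rceil$, then forces $e_n(\underline\pi)\in I^{\lceil n/d\rceil}$. The assertion about $e_0$ is obtained by evaluating the defining product at $x=0$.

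I do not expect any genuine obstacle: the only things to watch are keeping the multivariable expansion organized, noting that the coefficients of $E(\pi)$ are $p$-integral so that no denominators appear, and invoking the two inequalities in the correct direction ($n\leq d\sum_i k_i$ for part~(1), subadditivity of $\lceil\,\cdot\,\rceil$ for part~(2)).
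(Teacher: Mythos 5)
Your degree count for (1) and your derivation of $e_n(\underline\pi)\in I^{\lceil n/d\rceil}$ are correct, and they are essentially the paper's own argument: each factor $E(a_i\pi x^i)$ of $E_f(x)_\pi$ is a power series in $\pi x^i$ with $0\le i\le d$, so any monomial contributing to $x^n$ carries at least $n/d$, hence at least $\lceil n/d\rceil$, powers of $\pi$; part (2) then follows by multiplying the $\ell$ factors and using $\lceil n_1/d\rceil+\cdots+\lceil n_\ell/d\rceil\ge\lceil n/d\rceil$. Your multi-index bookkeeping just makes explicit what the paper states in one sentence, and your remark that monicity is irrelevant (only $\deg\le d$ and $p$-integrality of the Artin--Hasse coefficients matter) is right.

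The step that fails is the last one. Evaluating $\prod_{j=1}^{\ell}E_{c_jf}(x)_{\pi_j}$ at $x=0$ does \emph{not} give $1$: the $i=0$ factor of $E_{c_jf}(x)_{\pi_j}$ is $E(c_ja_0\pi_j)$, which is constant in $x$, so your evaluation yields $e_0(\underline\pi)=\prod_{j=1}^{\ell}E(c_ja_0\pi_j)=1+a_0\sum_{j}c_j\pi_j+\cdots$, and this equals $1$ only when $a_0=f(0)=0$. In general your computation proves only the congruence $e_0\equiv 1\bmod I$, i.e.\ that $e_0$ is a $1$-unit of $\ZZ_q\llbracket\underline\pi\rrbracket$, not the stated equality. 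To be fair, the paper is also cavalier here: its proof says ``(2) follows from (1) immediately'' without addressing $e_0$, and the claim $e_0=1$ as literally written likewise requires $\bar f(0)=0$; what is actually needed downstream (the determinant manipulations in the proof of Theorem~\ref{T:key technical theorem}, which are carried out modulo $pI^{\lambda'_k}+I^{\lambda'_k+1}$) is only that $e_0$ is a unit congruent to $1$ modulo $I$. So you should either record the congruence $e_0\equiv 1\bmod I$ as what your expansion gives, or first reduce to the case $a_0=0$, rather than assert that setting $x=0$ produces $1$.
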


\begin{proof}
Note that the $i$th factor of $E_f(x)_\pi$ in \eqref{E:Ef(x)} is a power series in $\pi x^i$ for $1 \leq i \leq d$; so every term in their product is a sum of products of $\pi, \pi x, \dots, \pi x^d$. (1) is  clear from this.  (2) follows from (1) immediately.
\end{proof}

\begin{convention}
\label{Conv:matrices start with zero}
In this paper, the row and column indices of matrices start with zero.
\end{convention}

\subsection{Dwork's trace formula}
Consider the following ``Banach module" over $\ZZ_q\llbracket \underline \pi\rrbracket$ with ``orthonormal basis" $\Gamma: = \{1, x, x^2, \dots\}$:\footnote{Since $\ZZ_q\llbracket \underline \pi\rrbracket$ is not a Banach algebra, $\widetilde \bfB$ is not a Banach space in the literal sense.}
\begin{equation}
\label{E:widetilde B}
\widetilde{\textbf{B}}:=\ZZ_q \llbracket \underline \pi \rrbracket \langle x\rangle = \Big\{\sum\limits_{n=0}^{\infty} d_{n}(\underline \pi) x^n\; |\; d_n(\underline \pi) \in \ZZ_q[\![ \underline{\pi}]\!]\; \textrm{ and } \lim\limits_{n\to \infty} d_{n}=0\Big\}.\footnote{This $\widetilde \bfB$ is different from the space $B$ considered in \cite[Section~2]{Davis-wan-xiao}, where the extra rescaling factors $\pi^{i/d}$ are used to simplify the notation of the proof. We cannot do such simplification over a multi-dimensional weight space.}
\end{equation}

Let $\psi_p$ denote the operator on $\widetilde{\bold{B}}$ defined by 
$$\psi_p\Big(\sum_{n\geq 0}^\infty d_n(\underline \pi) x^n\Big): = \sum_{n\geq 0}^\infty d_{pn}(\underline \pi) x^n,$$ 
and let $\psi$  be the composite linear operator 
\begin{equation}
\label{E:psi}
\psi := \psi_p \circ\prod\limits_{j=1}^{\ell} E_{ c_jf}(x)_{\pi_j}: \widetilde{\bold{B}} \longrightarrow \widetilde{\bold{B}},
\end{equation}
where $ \prod\limits_{j=1}^{\ell} E_{ c_jf}(x)_{\pi_j}(g):=\prod\limits_{j=1}^{\ell} E_{ c_jf}(x)_{\pi_j}\cdot g$ for any $g\in \widetilde{\bold{B}}$. 
One can easily check that 
\[
\psi \big(x^n\big) = \sum_{m=0}^\infty e_{mp-n}(\underline \pi) x^m,
\]
where $e_n = e_n(\underline \pi)$ is as defined in Lemma~\ref{L:estimate of Ef(x)}(2) (for $i < 0$, we set $e_i = 0$).
Explicitly, the matrix of $\psi$ with respect to the basis $\Gamma:=\{1,x,x^2,\dots\}$ is given by
\begin{equation}
\label{E:explicit N}
N=\big( e_{mp-n}\big)_{m,n\geq 0} =\begin{pmatrix} 
e_0&0&\cdots&0& 0 &\cdots &0&\cdots\\
e_p & e_{p-1} & \cdots & e_0 &  0  & \cdots  & 0 & \cdots\\
e_{2p} & e_{2p-1} & \cdots & e_p & e_{p-1}  & \cdots & e_0 & \cdots\\
\vdots & \vdots & \ddots & \vdots & \vdots & \vdots & \ddots  & \ddots\\
e_{mp} & e_{mp-1} & \cdots & e_{mp-p} & e_{mp-p-1} & \cdots & e_{mp-2p} & \cdots\\
\vdots & \vdots & \ddots & \vdots & \vdots & \ddots & \vdots & \ddots
\end{pmatrix}.
\end{equation}

The operator $\sigma^{-1}\circ \psi$ is $\sigma^{-1}$-linear, but its $a$-th iteration $(\sigma^{-1}\circ \psi)^a$ is linear since 
$\sigma^a$ acts trivially on $\ZZ_q\llbracket \underline \pi\rrbracket$. For the same reason, $\sigma^a(N) =N$. 

\begin{theorem}[Dwork Trace Formula]

For every $k>0$, we have
$$S^*(k,\underline{\pi})=(q^k-1)\Tr_{\widetilde{\bold{B}}/\ZZ_q[\![ \underline{\pi}]\!]}\big((\sigma^{-1}\circ\psi)^{ak}\big).$$
\end{theorem}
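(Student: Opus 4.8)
The plan is to follow the classical Dwork-style argument: compute $S^*(k,\underline\pi)$ as a sum over $\FF_{q^k}^\times$, reinterpret the summand as a trace of a composite of linearly twisted operators via a fixed-point/Dwork-type identity, and then identify that trace with the trace of $(\sigma^{-1}\circ\psi)^{ak}$ on $\widetilde{\mathbf B}$. The first step I would carry out is to rewrite $S^*(k,\underline\pi)$. Since $c_j$ is the Teichm\"uller lift of $\bar c_j$ (Hypothesis~\ref{H:teichmuller}) and $\omega(x)$ is the Teichm\"uller lift of a point of $\FF_{q^k}^\times$, the quantity $\prod_{j=1}^\ell E(\pi_j)^{\Tr_{\QQ_{q^k}/\QQ_p}[c_j f(\omega(x))]}$ should be expanded, using the Artin--Hasse series and the fact that $\Tr_{\QQ_{q^k}/\QQ_p} = \Tr_{\QQ_q/\QQ_p}\circ\Tr_{\QQ_{q^k}/\QQ_q}$, into a product of the form $\prod_{r=0}^{ak-1} \prod_{j=1}^\ell E_{c_j f}^{\sigma^{r}}(\omega(x)^{p^{r}})_{\pi_j}$ (up to the standard splitting manipulations), i.e. the summand is a product of $ak$ Frobenius-twisted copies of $\prod_j E_{c_j f}(x)_{\pi_j}$ evaluated along the $p$-power orbit of $\omega(x)$. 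This is the step where one must be careful with how the trace down to $\QQ_p$ interacts with the multiplicativity of $E(-)$; here one invokes the splitting-function property $\prod_{r=0}^{p^a-1} E(\pi x^{p^r}) \equiv$ (the additive character) that underlies Dwork's exponential, now in the $\pi$-adic ("$T$-adic") incarnation where no actual additive character is needed.

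The second step is the operator-theoretic heart. One checks that for the $\sigma^{-1}$-linear operator $u := \sigma^{-1}\circ\psi = \sigma^{-1}\circ\psi_p\circ\big(\prod_j E_{c_jf}(x)_{\pi_j}\big)$ acting on $\widetilde{\mathbf B}$, the $ak$-th iterate $u^{ak}$ (which is $\ZZ_q\llbracket\underline\pi\rrbracket$-linear since $\sigma^{ak}=\id$ on the coefficient ring) has a trace computable by the Dwork fixed-point formula: $\Tr(u^{ak}) = \frac{1}{q^k-1}\sum_{x\in\FF_{q^k}^\times}(\text{value of the kernel along the orbit of }\omega(x))$. Concretely, $\psi_p$ is "division of the exponent by $p$ then keep integral part," whose matrix $N$ in \eqref{E:explicit N} has the property that $\Tr$ of a composite $\psi_p\circ(\text{mult. by }G)$ iterated $ak$ times picks out exactly the terms $x^n$ with $p^{ak}n = n$, i.e. $n=0$, weighted by a geometric-series factor $\frac{1}{q^k-1}$ coming from summing $x^{n(q^k-1)}$-type contributions; running this through with $G=\prod_j E_{c_jf}(x)_{\pi_j}$ and all its $\sigma$-twists reproduces precisely the product of twisted Artin--Hasse factors from Step~1 evaluated at $\omega(x)$, summed over $x\in\FF_{q^k}^\times$. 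Combining Steps~1 and~2 gives $S^*(k,\underline\pi) = (q^k-1)\Tr_{\widetilde{\mathbf B}/\ZZ_q\llbracket\underline\pi\rrbracket}\big((\sigma^{-1}\circ\psi)^{ak}\big)$.

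The main obstacle, I expect, is the bookkeeping in Step~1: tracking how $\Tr_{\QQ_{q^k}/\QQ_p}$ distributes the single Artin--Hasse factor $E(\pi_j)^{(\cdot)}$ over the full Frobenius orbit of length $ak$, and verifying that the resulting product of $\sigma^r$-twisted factors $E_{c_jf}^{\sigma^r}(x)_{\pi_j}$ is exactly the kernel whose Dwork trace one computes in Step~2 — in particular that the "$\sigma^{-1}$" twists on the operator side match the Frobenius twists on the exponential side, and that convergence in $\widetilde{\mathbf B} = \ZZ_q\llbracket\underline\pi\rrbracket\langle x\rangle$ is legitimate (this is where Lemma~\ref{L:estimate of Ef(x)}, giving $e_n(\underline\pi)\in I^{\lceil n/d\rceil}$, is used: it makes $\psi$, hence $u^{ak}$, nuclear/trace-class in the appropriate $\pi$-adic sense, so that the trace and the fixed-point sum both converge and agree). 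Once the nuclearity and the orbit bookkeeping are in place, the identity follows from the standard Dwork fixed-point computation applied verbatim; this is the one-dimensional-weight argument of \cite{liu-wan} with $\pi$ replaced by the tuple $\underline\pi$ and $f$ by the collection $\{c_j f\}_{j=1}^\ell$, and nothing essential changes because $\sigma$ acts trivially on all the $\pi_j$.
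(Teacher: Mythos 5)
Your proposal is correct and is essentially the argument the paper itself invokes: the paper's proof just cites \cite[Lemma~4.7]{liu-wan} and the universal trace formula of \cite{wan96}, and what you sketch --- splitting $E(\pi_j)^{\Tr_{\QQ_{q^k}/\QQ_p}(c_jf(\omega(x)))}$ over the Frobenius orbit via the Artin--Hasse identity $\prod_{r=0}^{ak-1}E(\pi\,\hat u^{p^r})=E(\pi)^{\Tr(\hat u)}$ for Teichm\"uller $\hat u$, then computing $\Tr\big(\psi_p^{ak}\circ\prod_{r=0}^{ak-1}G^{\sigma^r}(x^{p^r})\big)$ by the standard nuclear fixed-point computation --- is exactly that proof carried over to the tuple $\underline\pi$. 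One small correction to your Step~2 gloss: the trace of $\psi_p^{ak}$ composed with multiplication by $H=\sum_m h_m x^m$ is $\sum_{n\geq 0}h_{n(q^k-1)}$, i.e.\ $(q^k-1)^{-1}$ times the sum of $H$ over all $(q^k-1)$-st roots of unity (the Teichm\"uller points), not just the $n=0$ term.
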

\begin{proof} The proof is the same as in \cite[Lemma~4.7]{liu-wan}. The key point is that the Dwork trace formula is 
universally true, see \cite{wan96} for a thorough understanding of the universal Dwork trace formula. 
\end{proof}

\begin{corollary}\label{determinant}
The theorem above has an equivalent multiplicative form:
\begin{equation}\label{dwork}
C_f^*(\underline{\pi}, s)= \det\big(I-s \sigma^{a-1}(N) \cdots \sigma(N) N \big).
\end{equation}
\end{corollary}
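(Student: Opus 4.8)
The goal is to pass from the additive statement of Dwork's trace formula, which expresses $S^*(k,\underline\pi)$ in terms of traces of iterates of $\sigma^{-1}\circ\psi$, to the multiplicative statement \eqref{dwork} expressing $C_f^*(\underline\pi,s)$ as a Fredholm determinant. The plan is to combine the trace formula with the exponential-of-traces identity $\det(I-sA)^{-1} = \exp\big(\sum_{k\geq 1}\Tr(A^k)\tfrac{s^k}{k}\big)$, being careful about the $\sigma$-semilinearity and the normalizing factor $\tfrac{1}{1-q^k}$.

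First I would set $A := \sigma^{a-1}(N)\cdots\sigma(N)N$, the matrix of the $\ZZ_q\llbracket\underline\pi\rrbracket$-linear operator $(\sigma^{-1}\circ\psi)^a$ on $\widetilde{\mathbf B}$ with respect to the basis $\Gamma$; here one uses that $\sigma^{-1}\circ\psi$ has matrix $\sigma^{-1}(N)$ in the sense of semilinear operators, so that composing $a$ times and using $\sigma^a = \mathrm{id}$ on $\ZZ_q\llbracket\underline\pi\rrbracket$ produces exactly $A$. The operator $\psi$ is nuclear (completely continuous) because $e_n(\underline\pi)\in I^{\lceil n/d\rceil}\to 0$ by Lemma~\ref{L:estimate of Ef(x)}(2), so $A$ is nuclear and its Fredholm determinant $\det(I-sA)$ makes sense as an element of $\ZZ_q\llbracket\underline\pi\rrbracket\llbracket s\rrbracket$. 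Then $(\sigma^{-1}\circ\psi)^{ak} = ((\sigma^{-1}\circ\psi)^a)^k$ has matrix $A^k$, so $\Tr_{\widetilde{\mathbf B}/\ZZ_q\llbracket\underline\pi\rrbracket}\big((\sigma^{-1}\circ\psi)^{ak}\big) = \Tr(A^k)$.

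Next I would compute, using the Dwork Trace Formula,
\[
\sum_{k=1}^\infty \frac{1}{1-q^k}S^*(k,\underline\pi)\frac{s^k}{k} = \sum_{k=1}^\infty \frac{q^k-1}{1-q^k}\Tr(A^k)\frac{s^k}{k} = -\sum_{k=1}^\infty \Tr(A^k)\frac{s^k}{k}.
\]
Exponentiating and invoking the standard Fredholm determinant identity $\exp\big(-\sum_{k\geq 1}\Tr(A^k)s^k/k\big) = \det(I-sA)$ for a nuclear operator $A$ gives $C_f^*(\underline\pi,s) = \det(I-sA) = \det\big(I-s\,\sigma^{a-1}(N)\cdots\sigma(N)N\big)$, which is \eqref{dwork}. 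The coefficients land in $\ZZ_q\llbracket\underline\pi\rrbracket$, but since $C_f^*(\underline\pi,s)$ is visibly Galois-invariant (the left side has coefficients in $\ZZ_p\llbracket\underline\pi\rrbracket$ by its defining product over closed points, and $\sigma^a(N)=N$ together with the cyclic structure of $A$ makes the right side $\sigma$-invariant), both sides in fact lie in $\ZZ_p\llbracket\underline\pi\rrbracket\llbracket s\rrbracket$.

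The main obstacle is the bookkeeping around semilinearity: one must check carefully that the matrix of the $a$-fold composite $(\sigma^{-1}\circ\psi)^a$ is indeed $\sigma^{a-1}(N)\cdots\sigma(N)N$ and not some other ordering or twist, and that the trace of the $k$-th power of this \emph{linear} operator equals $\Tr(A^k)$ rather than something involving further Frobenius twists. This is a routine but error-prone computation; it hinges on the observations already recorded in the excerpt, namely that $\sigma^{-1}\circ\psi$ is $\sigma^{-1}$-linear with matrix $N$ (so its effect on coordinate vectors is $v\mapsto \sigma^{-1}(N\,\sigma(v))$, wait—more precisely $v \mapsto \sigma^{-1}(N)\sigma^{-1}(v)$), and that $\sigma^a$ is trivial on $\ZZ_q\llbracket\underline\pi\rrbracket$. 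Beyond that, the only subtlety is justifying convergence of the Fredholm determinant and the trace-exponential identity in the $\underline\pi$-adic (rather than a genuine Banach) setting; this follows from the nuclearity estimate in Lemma~\ref{L:estimate of Ef(x)}(2) exactly as in \cite[Lemma~4.7]{liu-wan}.
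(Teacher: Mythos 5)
Your argument is correct and is essentially the paper's own proof: the paper likewise substitutes the trace formula so that $\tfrac{1}{1-q^k}S^*(k,\underline\pi)=-\Tr\big((\sigma^{-1}\circ\psi)^{ak}\big)$, invokes the exponential-of-traces/Fredholm identity to get $\det\big(I-(\sigma^{-1}\circ\psi)^a s\,\big|\,\widetilde{\mathbf B}\big)$, and then writes the matrix of $(\sigma^{-1}\circ\psi)^a$ as $\sigma^{-1}(N)\sigma^{-2}(N)\cdots\sigma^{-a}(N)=\sigma^{a-1}(N)\cdots\sigma(N)N$ using $\sigma^a(N)=N$. Your semilinear bookkeeping (after the self-correction $v\mapsto\sigma^{-1}(N)\sigma^{-1}(v)$) agrees with this, so there is nothing to add.
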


\begin{proof}

It follows from the following list of equalities 
\begin{align*}
C_f^*(\underline{\pi}, s) =& \exp \big( \sum_{k=1}^\infty \frac 1{1-q^k} S^*(k,\underline{\pi})\frac{s^k}{k} \big)  \\
=&\exp \big( \sum_{k=1}^\infty {-\Tr_{\widetilde{\textbf{B}}/\ZZ_q[\![ \underline{\pi}]\!]}((\sigma^{-1}\circ\psi)^{ak})}\frac{s^k}{k} \big) \\
=& \det  \big(I-(\sigma^{-1}\circ\psi)^{a}s\;\big|\;\widetilde{\bold{B}}\big)\\
=& \det\big(I-s \sigma^{-1}(N) \sigma^{-2}(N) \cdots \sigma^{-a}(N)\big)
\\
=& \det \big(I-s \sigma^{a-1}(N) \cdots \sigma(N)N\big). \qedhere
\end{align*}
\end{proof}

\section{A Hodge bound for $C_f^*($\underline{\it T}$,s)$}
\label{Sec:Hodge bound}

In this section, we prove Theorem~\ref{mainforC}(1), which will follow from the key estimate of a certain (variant of) Hodge polygon  bound in Proposition~\ref{lemma}.
We continue to assume Hypothesis~\ref{H:teichmuller}.

\begin{notation}
The ideal in $\ZZ_q[\![ \underline{\pi}]\!]$ generated by $(\pi_1,..., \pi_{\ell})$ is also denoted by $I$. 
We define a \emph{valuation function}
\[\val_{ I}:\ZZ_q[\![ \underline{\pi}]\!] \xrightarrow{\qquad} \ZZ \cup \{\infty\},\qquad\qquad
\]
\[
\val_I(x) = \begin{cases}
n & \textrm{ if }x \in I^n \textrm{ and } x \notin I^{n+1},\\
\infty & \textrm{ if }x=0.
\end{cases}
\]	
Note that $\val_{ I}(ab) = \val_{ I}(a) + \val_{ I}(b)$ for $a,b \in \ZZ_q\llbracket \underline \pi\rrbracket$.

\end{notation}

\begin{remark}
Using this valuation function, we can similarly define the \emph{$I$-adic Newton polygon} of a power series $\sum\limits_{k\geq 0}^\infty c_k(\underline \pi) s^k \in \ZZ_q\llbracket \underline \pi, s \rrbracket$ to be the lower convex hull of the points $(k, \val_I(c_k(\underline \pi)))$. Then Theorem~\ref{mainforC} says that the $I$-adic Newton polygon of $C_f^*(\underline \pi, s)$ lies above the polygon with vertices $(k, \lambda_k)$ with $\lambda_k = \frac{ak(k-1)(p-1)}{2d}$, and it passes through the points $(nd, \lambda_{nd})$ and $(nd+1, \lambda_{nd+1})$ for all $n \in \ZZ_{\geq 0}$.
\end{remark}

\begin{definition}
\label{D:twisted incremental}
Let $M_\infty(\ZZ_q\llbracket \underline \pi \rrbracket)$ denote the set of matrices with entries in $\ZZ_q\llbracket \underline \pi \rrbracket$, whose rows and columns are indexed by $\ZZ_{\geq 0}$ (recall from Convention~\ref{Conv:matrices start with zero} that all row and column indices start from $0$).

We say a matrix $N=(h_{mn})_{m,n\geq 0}\in M_\infty(\ZZ_q[\![\underline{\pi}]\!])$ is \emph{twisted $I$-adically incremental (in $d$ steps)} if $\val_I(h_{mn})\geq \frac{mp-n}d$ (or equivalently $\val_I(h_{mn})\geq \lceil\frac{mp-n}d\rceil$) for all integers $m,n\geq 0$.\footnote{We invite the readers to compare this with \cite[Proposition~3.12(1)]{liu-wan-xiao}, which is the estimate before the conjugation by a diagonal matrix.}
By Lemma~\ref{L:estimate of Ef(x)}(2) and \eqref{E:explicit N}, we see that the matrix $N$ (and more generally $\sigma^i(N)$) is twisted $I$-adically incremental for every $i$.
\end{definition}

Proposition~\ref{lemma} below allows us to control the $I$-adic Newton polygon of $C_f^*(\underline \pi, s)$ using the twisted $I$-adic incremental property of these $\sigma^i(N)$'s.

\begin{notation}
For a matrix $M$, we write \[\left[ \begin{array}{cccccccccc}
m_0 & m_1 &\cdots&m_{k-1} \\
n_0 & n_1 &\cdots&n_{k-1}  \end{array} \right]_M\]
for the $k\times k$-matrix formed by elements whose row indices belong to $\{m_0,m_1,\dots,m_{k-1}\}$ and whose column indices belong to $\{n_0,n_1,\dots,n_{k-1}\}$.
\end{notation}

\begin{lemma}
\label{L:twisted Hodge bound to det}
Let $M = (h_{mn}) \in M_\infty(\ZZ_q\llbracket \underline \pi\rrbracket)$ be a twisted $I$-adically incremental matrix, then for indices $m_0, \dots, m_{k-1}$ and $n_0, \dots, n_{k-1}$, we have
\[
\val_I \Big( \det \left[ \begin{array}{cccccccccc}
m_0 & m_1 &\cdots&m_{k-1} \\
n_0 & n_1 &\cdots&n_{k-1}  \end{array} \right]_M \Big) \geq \sum_{i = 0}^{k-1} \frac{pm_i-n_i}{d}.
\]
\end{lemma}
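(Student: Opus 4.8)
The statement is a purely combinatorial consequence of the twisted $I$-adic incremental hypothesis together with the multiplicativity $\val_I(ab)=\val_I(a)+\val_I(b)$ and the ultrametric triangle inequality $\val_I(a+b)\ge\min\{\val_I(a),\val_I(b)\}$. First I would expand the determinant via the Leibniz formula:
\[
\det \left[ \begin{array}{cccc}
m_0 & m_1 &\cdots&m_{k-1} \\
n_0 & n_1 &\cdots&n_{k-1}  \end{array} \right]_M
= \sum_{\tau \in S_k} \mathrm{sgn}(\tau) \prod_{i=0}^{k-1} h_{m_i, n_{\tau(i)}}.
\]
Then the triangle inequality gives $\val_I$ of the determinant $\ge \min_{\tau\in S_k} \sum_{i=0}^{k-1}\val_I(h_{m_i,n_{\tau(i)}})$, and the hypothesis $\val_I(h_{mn})\ge \frac{pm-n}{d}$ bounds each summand from below by $\frac{pm_i - n_{\tau(i)}}{d}$.

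The remaining point is that for every permutation $\tau\in S_k$,
\[
\sum_{i=0}^{k-1} \frac{pm_i - n_{\tau(i)}}{d} = \frac{1}{d}\Big( p\sum_{i=0}^{k-1} m_i - \sum_{i=0}^{k-1} n_{\tau(i)}\Big) = \frac{1}{d}\Big( p\sum_{i=0}^{k-1} m_i - \sum_{i=0}^{k-1} n_i\Big) = \sum_{i=0}^{k-1}\frac{pm_i-n_i}{d},
\]
since $\tau$ merely permutes the index set $\{n_0,\dots,n_{k-1}\}$ and hence leaves the sum $\sum_i n_i$ unchanged. Thus the lower bound is the same for every term in the Leibniz expansion, and the minimum over $\tau$ equals this common value, which yields the claimed inequality.

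There is essentially no obstacle here; the only thing to be slightly careful about is that $\val_I$ on $\ZZ_q\llbracket\underline\pi\rrbracket$ is genuinely additive on products and superadditive on sums (both already noted in the text just before the statement, since $\val_I$ is the $I$-adic order function on a ring in which $I$ is generated by a regular sequence of variables), so the Leibniz-sum estimate is legitimate. One should also observe that the hypothesis was stated with the non-ceiling version $\val_I(h_{mn})\ge \frac{pm-n}{d}$, which is exactly what is used; the ceiling is irrelevant for this particular bound. This lemma is then the combinatorial engine feeding into Proposition~\ref{lemma} and hence into Theorem~\ref{mainforC}(1).
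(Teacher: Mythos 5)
Your proposal is correct and coincides with the paper's own argument: both expand the determinant into its Leibniz terms, apply additivity of $\val_I$ together with the incremental bound $\val_I(h_{mn})\geq \frac{pm-n}{d}$ to each product, and observe that the resulting lower bound is independent of the permutation because permuting the column indices does not change $\sum_i n_i$. Nothing further is needed.
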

\begin{proof}
In fact, we show that the $\val_I$ of each term in the determinant above is greater than or equal to $\sum\limits_{i = 0}^{k-1} \frac{pm_i-n_i}{d}.$
Indeed, for each permutation $\sigma \in \mathrm{Aut}(\{0, \dots, k-1\})$, we have
\[
\val_I\big(h_{m_0 n_{\sigma(0)}} \cdots h_{m_{k-1} n_{\sigma(k-1)}} \big) \geq \frac{pm_0 - n_{\sigma(0)}}d + \cdots + \frac{pm_{k-1} - n_{\sigma(k-1)}}d \geq \sum_{i = 0}^{k-1} \frac{pm_i-n_i}{d}.
\]
The lemma follows.
\end{proof}

\begin{proposition}\label{lemma}

Let $M_0,M_1,\dots,M_{a-1} \in M_\infty(\ZZ_q\llbracket \underline \pi \rrbracket)$ be twisted $I$-adically incremental matrices, and let $\det(I-sM_{a-1} \cdots M_1 M_0)=\sum\limits_{k=0}^\infty (-1)^k r_k(\underline \pi) s^k $ denote the characteristic power series of their product, then for every integer $k\geq 0$, we have 
\[\val_I(r_k(\underline \pi))\geq  \frac{ak(k-1)(p-1)}{2d}, \quad \textrm{and}\]
\[
r_k(\underline \pi)\equiv\prod\limits_{j=0}^{a-1}\bigg(\det \left[ \begin{array}{cccccccccc}
0 &1 &\cdots&k-1
\\
0 &1 &\cdots&k-1 \end{array} \right]_{M_j}\bigg)  \quad \bmod \ {I^{\lceil\frac{ak(k-1)(p-1)+(p-1)}{2d}\rceil}}.\]

\end{proposition}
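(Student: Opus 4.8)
The plan is to extract the coefficients $r_k(\underline\pi)$ of the characteristic power series via the usual Cauchy–Binet expansion and then apply Lemma~\ref{L:twisted Hodge bound to det} term by term. Recall that if $P = M_{a-1}\cdots M_1 M_0$, then $r_k(\underline\pi)$ is the sum of the principal $k\times k$-minors of $P$ indexed by subsets $S = \{m_0 < \cdots < m_{k-1}\} \subseteq \ZZ_{\geq 0}$. By iterated Cauchy–Binet, each such minor $\det\big[\begin{smallmatrix} S \\ S\end{smallmatrix}\big]_P$ is a sum over chains of index sets $S = S_a, S_{a-1}, \dots, S_1, S_0 = S$ (each $S_j$ a $k$-element subset of $\ZZ_{\geq 0}$) of the product $\prod_{j=0}^{a-1} \det\big[\begin{smallmatrix} S_{j+1} \\ S_j\end{smallmatrix}\big]_{M_j}$. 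So $r_k(\underline\pi)$ is a sum, over all such chains, of products of $k\times k$-minors of the $M_j$'s.

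The first step is the lower bound. For a fixed chain with $S_j = \{m_0^{(j)} < \cdots < m_{k-1}^{(j)}\}$, Lemma~\ref{L:twisted Hodge bound to det} gives
\[
\val_I\Big( \prod_{j=0}^{a-1} \det\Big[\begin{smallmatrix} S_{j+1} \\ S_j\end{smallmatrix}\Big]_{M_j}\Big) \geq \sum_{j=0}^{a-1}\sum_{i=0}^{k-1}\frac{p\,m_i^{(j+1)} - m_i^{(j)}}{d}.
\]
Summing the telescoping-type expression over $j$ and using that the $m_i^{(j)}$ run over distinct nonnegative integers (so each column-sum $\sum_i m_i^{(j)} \geq 0 + 1 + \cdots + (k-1) = \frac{k(k-1)}{2}$, while the row-index sets $S_a = S_0 = S$ cancel the "boundary" contributions), one gets the bound $\sum_{j}\sum_i \frac{(p-1)m_i^{(j)}}{d} \geq \frac{a \cdot (p-1)}{d}\cdot \frac{k(k-1)}{2}$, which is exactly $\lambda_k = \frac{ak(k-1)(p-1)}{2d}$. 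This proves $\val_I(r_k(\underline\pi)) \geq \lambda_k$.

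The second step — and the main obstacle — is identifying the contribution of exact valuation $\lambda_k$, i.e. showing that modulo $I^{\lceil \lambda_k + \frac{p-1}{2d}\rceil}$ (equivalently $I^{\lambda_k + 1}$ up to the ceiling bookkeeping) the only surviving chain is the "diagonal" one $S_j = \{0, 1, \dots, k-1\}$ for all $j$. One must check that any other chain has valuation strictly larger, by at least $\frac{p-1}{d}$; this is where the precise shape of the denominators matters. If some $S_j \neq \{0,\dots,k-1\}$, then either some column-index set has $\sum_i m_i^{(j)} \geq \frac{k(k-1)}{2} + 1$, or the chain is "constant but shifted" — but a shifted constant chain forces $m_0^{(0)} = m_0^{(a)} \geq 1$ contradicting $S_0 = S$ being the same at both ends, unless $S$ itself is shifted, in which case the row contributions $p\,m_i^{(j+1)}$ push the valuation up by even more than $\frac{p-1}{d}$ because $p \geq 2$. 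A careful case analysis (constant chains with $S \neq \{0,\dots,k-1\}$ gain at least $\frac{(p-1)k}{d} \geq \frac{p-1}{d}$; non-constant chains gain at least $\frac{1}{d}\cdot(\text{something} \geq p-1)$ from the telescoping slack) shows the gap is always $\geq \frac{p-1}{d}$, hence the ceiling $\lceil \frac{ak(k-1)(p-1)+(p-1)}{2d}\rceil$ is reached only once. Thus $r_k(\underline\pi) \equiv \sum_{S=\{0,\dots,k-1\}} \prod_{j=0}^{a-1}\det\big[\begin{smallmatrix} 0 & 1 & \cdots & k-1 \\ 0 & 1 & \cdots & k-1\end{smallmatrix}\big]_{M_j}$ modulo that power of $I$, which is the claimed congruence. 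I expect the bookkeeping in this second step — tracking exactly how much valuation is gained by each deviation from the diagonal chain, and confirming the gap is uniformly $\geq \frac{p-1}{d}$ so that the stated ceiling exponent is correct — to be the delicate part of the argument.
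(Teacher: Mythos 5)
Your argument is essentially the paper's proof: the same Cauchy--Binet expansion of $r_k(\underline \pi)$ into cyclic chains of $k\times k$ minors (with $S_a=S_0$), the same termwise application of Lemma~\ref{L:twisted Hodge bound to det}, and the same telescoping identity giving $\val_I \geq \frac{p-1}{d}\sum_{j,i} m^{(j)}_i \geq \frac{ak(k-1)(p-1)}{2d}$. The second step you flag as delicate is in fact immediate and requires no case analysis on ``constant'' versus ``shifted'' chains (the contradiction you invoke for a shifted constant chain does not arise, since $S_0=S_a$ holds by construction, nor is it needed): any $k$-element set of distinct nonnegative integers other than $\{0,\dots,k-1\}$ has element sum at least $\frac{k(k-1)}{2}+1$, so every non-diagonal chain has $\val_I \geq \frac{p-1}{d}\bigl(\frac{ak(k-1)}{2}+1\bigr)$, which, $\val_I$ being integer-valued, is at least $\lceil\frac{ak(k-1)(p-1)+(p-1)}{2d}\rceil$, exactly as in the paper.
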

\begin{proof}
From the definition of characteristic power series, we see 
 \begin{equation}
 \label{E:expression of char power series}
\begin{split}
r_k(\underline \pi)& =\sum\limits_{0\leq m_0<m_1<\cdots<m_{k-1}<\infty}\det \left[ \begin{array}{cccccccccc}
m_0 & m_1 &\cdots&m_{k-1} \\
m_0 & m_1 &\cdots&m_{k-1}  \end{array} \right]_{M_{a-1}\cdots M_1M_0}     \\
&=\sum_{\substack{0\leq m_{0,0}<m_{0,1}<\cdots<m_{0,k-1}<\infty \\ \cdots\\0\leq m_{a-1,0}<m_{a-1,1}<\cdots<m_{a-1,k-1}<\infty}} \det\bigg(\prod\limits_{j=0}^{a-1}\left[ \begin{array}{cccccccccc}
m_{j+1,0} & m_{j+1,1} &\cdots&m_{j+1,k-1} \\
m_{j,0} & m_{j,1} &\cdots&m_{j,k-1}  \end{array} \right]_{M_j}\bigg)            \\
 &=\sum_{\substack{0\leq m_{0,0}<m_{0,1}<\cdots<m_{0,k-1}<\infty \\ \cdots\\0\leq m_{a-1,0}<m_{a-1,1}<\cdots<m_{a-1,k-1}<\infty}} \prod\limits_{j=0}^{a-1}\bigg(
 \det \left[ \begin{array}{cccccccccc}
m_{j+1,0} & m_{j+1,1} &\cdots&m_{j+1,k-1} \\
m_{j,0} & m_{j,1} &\cdots&m_{j,k-1} \end{array} \right]_{M_j}\bigg).
\end{split}
\end{equation}
Here and after, we set $m_{a,i}=m_{0,i}$ for all $0\leq i\leq k-1$.

Since every $M_i$ is twisted $I$-adically incremental, we can control each term in \eqref{E:expression of char power series} using Lemma~\ref{L:twisted Hodge bound to det}:
\begin{equation}
\label{E:valuation of each small term}
\begin{split}
&\val_{I}\Bigg(\prod\limits_{j=0}^{a-1}\bigg(
 \det \left[ \begin{array}{cccccccccc}
m_{j+1,0} & m_{j+1,1} &\cdots&m_{j+1,k-1} \\
m_{j,0} & m_{j,1} &\cdots&m_{j,k-1}  \end{array} \right]_{M_j}\bigg)\Bigg)       \\
\geq& \sum\limits^{a-1}_{j=0}\sum\limits^{k-1}_{i=0}\frac{pm_{j+1,i}-m_{j,i}}d  =  \frac{p-1}d\sum\limits^{a-1}_{j=0}\sum\limits^{k-1}_{i=0}m_{j,i} \geq\frac{ak(k-1)(p-1)}{2d}.
\end{split}
\end{equation}
This verifies the first statement.

Notice that the last inequality of \eqref{E:valuation of each small term} is an equality if and only if $m_{j,i}=i$ for all $ 0\leq j\leq a-1$ and $0\leq i\leq k-1$; and when it is not an equality, \eqref{E:valuation of each small term} is greater than or equal to $\frac{ak(k-1)(p-1)+(p-1)}{2d}$. Therefore, we have
\[
r_k(\underline \pi)\equiv\prod\limits_{j=0}^{a-1}\bigg(\det \left[ \begin{array}{cccccccccc}
0 &1 &\cdots&k-1
\\
0 &1 &\cdots&k-1 \end{array} \right]_{M_j}\bigg)  \quad \bmod \ {I^{\lceil\frac{ak(k-1)(p-1)+(p-1)}{2d}\rceil}}.\qedhere\]
\end{proof}

\medskip
\begin{proof}[Proof of Theorem~\ref{mainforC}(1)]
By Corollary \ref{determinant}, $C_f^*(\underline \pi, s)$ is the characteristic power series of the product $\sigma^{a-1}(N)\cdots \sigma(N)N$.
But each $\sigma^i(N)$ is twisted $I$-adically incremental, which implies Theorem~\ref{mainforC}(1) by applying Proposition \ref{lemma}.\footnote{Although this section assumes Hypothesis~\ref{H:teichmuller}, as pointed out in Remark~\ref{R:independence}, the validity of Theorem~\ref{mainforC}(1) does not depend on the choice of the basis $\{c_1, \dots, c_\ell\}$.
So our proof is complete.}
\end{proof}

\section{The proof of Theorem \ref{mainforC}(2)}
\label{Sec:main proof}

As a reminder, Hypothesis~\ref{H:teichmuller} is still in force in this section.
This section is devoted to prove Theorem \ref{mainforC}(2), whose proof will appear at the end of this section.
Its key ingredient is the following.
\begin{theorem}
\label{T:key technical theorem}
Put $\bar \gothT:=\sum\limits_{j=1}^\ell \bar c_j\pi_j \in \FF_q\llbracket \underline \pi \rrbracket$, then
\[
\det \left[ \begin{array}{cccccccccc}
0 & 1 &\cdots&k-1 \\
0 & 1 &\cdots&k-1  \end{array} \right]_N  \bmod p
\]
viewed as an element of $\FF_q \llbracket \underline \pi \rrbracket$, lies in $\FF_q\llbracket \bar \gothT\rrbracket$.
Moreover, the coefficients of this determinant as a power series in $\bar \gothT$ does not depend on $\ell$.
\end{theorem}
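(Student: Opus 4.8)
The plan is to compute the determinant $D_k := \det \left[ \begin{array}{cccc} 0 & 1 & \cdots & k-1 \\ 0 & 1 & \cdots & k-1 \end{array} \right]_N \bmod p$ explicitly enough to see its structure, using the explicit form of $N$ in \eqref{E:explicit N}, where the $(m,n)$-entry is $e_{mp-n}(\underline \pi)$. The first step is to understand the coefficients $e_n(\underline\pi) \bmod p$ of $\prod_{j=1}^\ell E_{c_jf}(x)_{\pi_j}$. Since the Artin--Hasse series satisfies $E(\pi) \equiv \exp(\pi) \bmod p$ (the higher Frobenius terms $\pi^{p^i}/p^i$ in the exponent vanish mod $p$ in a suitable sense — more precisely $E(\pi)=\prod_{p\nmid i}(1-\pi^i)^{-\mu(i)/i}$, and modulo $p$ one can track the $\pi$-expansion), I would first reduce $E_f(x)_\pi = \prod_{i=0}^d E(a_i\pi x^i)$ modulo $p$, so that $\prod_{j=1}^\ell E_{c_jf}(x)_{\pi_j} \equiv \exp\big(\sum_{j=1}^\ell \sum_{i=0}^d \bar c_j \bar a_i \pi_j x^i\big) = \exp\big(\bar\gothT \cdot \bar f(x)\big) \bmod p$, where $\bar\gothT = \sum_j \bar c_j \pi_j$ (using that $a_i, c_j$ are Teichmüller lifts so $a_i \equiv \bar a_i$, $c_j \equiv \bar c_j$ in the appropriate sense inside $\FF_q\llbracket \underline\pi\rrbracket$). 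This already shows that each $e_n(\underline \pi) \bmod p$ is a power of $\bar\gothT$ times a coefficient depending only on $\bar f$ and $n$: namely $e_n(\underline\pi) \equiv \gamma_n \bar\gothT^{\lceil n/d\rceil + (\text{correction})}\cdots$, but more cleanly, writing $\exp(\bar\gothT\bar f(x)) = \sum_n e_n(\underline\pi)x^n$, the coefficient $e_n$ is a polynomial in $\bar\gothT$ with coefficients in $\FF_q$ that \emph{depend only on $\bar f$, not on $\ell$ or the $c_j$'s}.

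The second step: substitute this into the determinant. The entries of the $k\times k$ matrix $\big[e_{mp-n}\big]_{0\le m,n\le k-1} \bmod p$ are all polynomials in the single variable $\bar\gothT$ over $\FF_q$, with the polynomial attached to position $(m,n)$ being exactly the coefficient of $x^{mp-n}$ in $\exp(\bar\gothT\bar f(x))$ — a quantity manifestly independent of $\ell$ and of the basis $\{c_j\}$ once expressed in $\bar\gothT$. Hence $D_k \in \FF_q\llbracket\bar\gothT\rrbracket$ (in fact a polynomial in $\bar\gothT$), and its coefficients as a power series in $\bar\gothT$ are determined entirely by $\bar f$ and $k$, with no reference to $\ell$. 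This is precisely the two assertions of the theorem. I would phrase the independence-of-$\ell$ claim as: the $\ell=1$ case uses the matrix with entries = coefficients of $x^{mp-n}$ in $\exp(\pi \bar f(x))=E_f(x)_\pi \bmod p$ (there $\bar\gothT$ is literally $\pi = \pi_1$), and the general-$\ell$ matrix is obtained from it by the single substitution $\pi \mapsto \bar\gothT = \sum_j \bar c_j\pi_j$; determinants commute with such substitutions, so $D_k^{(\ell)} = D_k^{(1)}|_{\pi \mapsto \bar\gothT}$, which has the same $\bar\gothT$-expansion coefficients.

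The main obstacle I anticipate is the first step — justifying cleanly that $E(\pi) \bmod p$ behaves like $\exp(\pi)$ for the purpose of this computation, and, more subtly, that the Teichmüller hypothesis (Hypothesis~\ref{H:teichmuller}) makes $c_j a_i \pi_j x^i$ reduce mod $p$ to $\bar c_j \bar a_i \pi_j x^i$ so that the product of Artin--Hasse factors collapses to $\exp(\bar\gothT \bar f(x))$. One must be careful that $E(a_i\pi x^i)$ is a power series whose mod-$p$ reduction, as an element of $\FF_q\llbracket\pi\rrbracket\llbracket x\rrbracket$, equals $\exp(\bar a_i \pi x^i)$ — this uses $E(\pi) \equiv \exp(\pi) \pmod p$ as formal power series, which holds because $E(\pi)\exp(-\pi) = \exp\big(\sum_{i\ge 1}(\pi^{p^i}/p^i - \pi^{p^i})\big)$ has all exponent terms with $p$-adically integral coefficients divisible by... — here one invokes the standard fact that $E(\pi) \in 1+\pi\ZZ_p\llbracket\pi\rrbracket$ and $E(\pi)\equiv\exp(\pi)\pmod{p\,,\ \text{i.e. in }\FF_p\llbracket\pi\rrbracket}$ is false literally (they differ), so instead I would avoid $\exp$ altogether and argue directly with $E$: show that $\prod_{j,i}E(\bar c_j \bar a_i \pi_j x^i) \bmod p$, expanded in $x$, has $x^n$-coefficient lying in $\FF_q[\bar\gothT]$ and independent of $\ell$, by using multiplicativity of $E$ under $E(\alpha)E(\beta)$-type identities only when the arguments are "compatible," or more robustly by a direct symmetry/substitution argument: the reduction mod $p$ of $\prod_{j=1}^\ell E_{c_jf}(x)_{\pi_j}$ depends on $(\pi_1,\dots,\pi_\ell)$ only through $\bar\gothT$ because replacing the tuple $(c_j)$-data by the single "variable" $\bar\gothT$ is forced by the structure of $E$ composed with the linear form $\sum_j c_j(\cdot)$ — and this is exactly Lemma~\ref{L:independence}-style reasoning. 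Getting this reduction airtight, possibly via a short lemma identifying $\prod_j E_{c_jf}(x)_{\pi_j} \bmod p$ with $E_{f}(x)_{\bar\gothT} \bmod p$, is where the real work lies; everything after that is linear algebra.
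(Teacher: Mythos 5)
The central step of your plan is false, and it is exactly the step you yourself flagged as "where the real work lies." You want each entry of the $k\times k$ minor, i.e.\ each $e_n(\underline\pi) \bmod p$, to lie in $\FF_q\llbracket \bar\gothT\rrbracket$, by identifying $\prod_{j=1}^\ell E_{c_jf}(x)_{\pi_j} \bmod p$ with the $\ell=1$ series $E_f(x)_\pi \bmod p$ under the substitution $\pi \mapsto \sum_j c_j\pi_j$. But the Artin--Hasse series is not additive-to-multiplicative even modulo $p$: from $E(\alpha)=\exp\big(\sum_r \alpha^{p^r}/p^r\big)$ one gets
\[
E(c_1\pi_1x)\,E(c_2\pi_2x) \;=\; E\big((c_1\pi_1+c_2\pi_2)x\big)\cdot\exp\Big(-\tfrac1p\sum_{0<i<p}\tbinom{p}{i}(c_1\pi_1)^i(c_2\pi_2)^{p-i}x^p+\cdots\Big),
\]
and the correction factor is \emph{not} $\equiv 1 \pmod p$, since $\tfrac1p\tbinom{p}{i}$ is a $p$-adic unit times an integer; for $p=2$, $\ell=2$, $f(x)=x$ it equals $1-c_1c_2\pi_1\pi_2x^2+\cdots$. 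Consequently $e_p \bmod p$ differs from the corresponding coefficient of $E_f(x)_{\bar\gothT}$ by terms such as $\bar c_1\bar c_2\pi_1\pi_2$, which is not a power series in $\bar\gothT=\bar c_1\pi_1+\bar c_2\pi_2$ (its degree-$2$ part is not proportional to $\bar\gothT^2$). So the individual matrix entries do \emph{not} lie in $\FF_q\llbracket\bar\gothT\rrbracket$, and the entrywise substitution $D_k^{(\ell)}=D_k^{(1)}|_{\pi\mapsto\bar\gothT}$ cannot be argued this way; the fact that the \emph{determinant} nevertheless lies in $\FF_q\llbracket\bar\gothT\rrbracket$ is precisely the nontrivial content of the theorem and cannot be obtained coefficient by coefficient.

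The paper circumvents this by applying the freshman's dream to a recursion rather than to the series itself: taking the logarithmic derivative of $\prod_j E_{c_jf}(x)_{\pi_j}$ gives $ne_n=\sum_{i,r} i\,e_{n-ip^r}a_i^{p^r}\sum_j(c_j\pi_j)^{p^r}$, and since only the expressions $\sum_j(c_j\pi_j)^{p^r}\equiv \bar\gothT^{p^r} \pmod p$ occur there, the mod-$p$ row relations among the $\bar e_n$'s have coefficients in $\FF_q\llbracket\bar\gothT\rrbracket$ that are independent of $\ell$ (Lemma~\ref{induction}). It then embeds the $k\times k$ block into the larger $(kp-p+1)\times(kp-p+1)$ matrix $\bar N_k^\rmT$ and performs row operations dictated by these relations (the matrix $\bar A_k(\bar\gothT)$), so that the determinant becomes, up to a sign, the determinant of a matrix whose entries visibly lie in $\FF_q\llbracket\bar\gothT\rrbracket$ with $\ell$-independent coefficients. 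Some argument of this kind---exploiting linear relations among rows rather than a purported structure of the individual entries---is unavoidable, so as written your proposal has a genuine gap at its foundation, even though the final formula you guess for $D_k$ is what the theorem asserts.
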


\begin{proof}
We write $\bar e_n: = e_n \bmod p \in \FF_q\llbracket \underline \pi \rrbracket$.
Consider the following $(kp-p+1)\times (kp-p+1)$ matrix
\[\bar N_{k}^\mathrm{T}
=\begin{tikzpicture}[baseline,decoration=brace]
\matrix (m) [mymatrix] {
\bar e_0&\bar e_{p}&\bar e_{2p}&\cdots&\bar e_{(k-1)p}&0&0&0&\cdots&0\\
0 &\bar  e_{p-1} &\bar e_{2p-1}&\cdots& \bar e_{(k-1)p-1}&0&0&0&\cdots&0\\
\vdots&\vdots&\vdots&\ddots&\vdots&\vdots&\vdots&\vdots&\ddots&\vdots\\
0&\bar e_{0}&\bar e_p&\cdots&\bar e_{(k-1)p-p}&0&0&0&\cdots&0\\
0  &  0  &\bar e_{p-1}&\cdots&\bar e_{(k-1)p-p-1} &0&0&0&\cdots&0\\
\vdots &\vdots &  \vdots &\ddots &\vdots&\vdots&\vdots&\vdots&\ddots&\vdots&\\
0  &  0  &*&\cdots &\bar e_{(k-1)(p-1)}&0&0&0&\cdots&0\\
	0  &  0  &*&\cdots &\bar e_{(k-1)(p-1)-1}&1&0&0&\cdots&0\\
	0  &  0  &*&\cdots &\bar e_{(k-1)(p-1)-2}&0&1&0&\cdots&0\\
	0  &  0  &*&\cdots &\bar e_{(k-1)(p-1)-3}&0&0&1&\cdots&0\\
\vdots  & \vdots  &\vdots&\ddots &\vdots&\vdots&\vdots&\vdots&\ddots&\vdots\\
	0  &  0  &0&\cdots &\bar e_0&0&0&0&0&1\\
};
\draw[decorate,transform canvas={xshift=-1.4em},thick] (m-7-1.south west) -- node[left=2pt] {$k$} (m-1-1.north west);
\draw[decorate,transform canvas={yshift=0.5em},thick] (m-1-1.north west) -- node[above=2pt] {$k$} (m-1-5.north east);
\draw[decorate,transform canvas={yshift=0.5em},thick] (m-1-6.north west) -- node[above=2pt] {$(k-1)(p-1)$} (m-1-10.north east);
\end{tikzpicture}
.\]
Note that the upper left $k \times k$-block of $\bar N_k^\mathrm{T}$ is the transpose of $\left[ \begin{array}{cccccccccc}
0 & 1 &\cdots&k-1 \\
0 & 1 &\cdots&k-1 \end{array} \right]_N$ modulo $p$,\footnote{Here, we made a tough choice to consider the transpose instead, so that the display of $N_{k}^\mathrm{T}$ is much nicer.} so we have an equality in $\FF_q \llbracket \underline \pi \rrbracket $:
\begin{equation} \label{N to Nnd}
\det \left[ \begin{array}{cccccccccc}
0 & 1 &\cdots&k-1 \\
0 & 1 &\cdots&k-1 \end{array} \right]_N \bmod p=\det(\bar N_k^\mathrm{T}).
\end{equation}

To study $\bar N_k^\rmT$, we need the following.

\begin{lemma}\label{induction}
We have the following equality and congruence. \begin{align}
\label{E:iteration equality}
ne_n&=\sum\limits^d_{i=1}\sum\limits^\infty_{r=0}i \cdot e_{n-ip^r}
 a_i^{p^r}\big(\sum\limits_{j=1}^\ell (c_j\pi_j)^{p^r}\big)
\\
\label{E:congruence}
&\equiv\sum\limits^d_{i=1}\sum\limits^\infty_{r=0}i \cdot e_{n-ip^r} a_i^{p^r}\big(\sum\limits_{j=1}^\ell c_j\pi_j\big)^{p^r}\pmod{p}.
\end{align}
\end{lemma}

\begin{proof}
Taking the derivative of $\prod\limits_{j=1}^\ell E_{ c_jf}(x)_{\pi_j}$ gives
\[\Big(\prod\limits_{j=1}^\ell E_{ c_jf}(x)_{\pi_j}\Big)'=\Big(\prod\limits_{j=1}^\ell E_{ c_jf}(x)_{\pi_j}\Big) \Big(\sum\limits^d_{i=1}\sum\limits^\infty_{r=0} \Big(\sum\limits_{j=1}^\ell (c_j\pi_j)^{p^r}\Big) ix^{i p^r-1}  a_i^{p^r}\Big).\]
Replacing $\prod\limits_{j=1}^\ell E_{ c_jf}(x)_{\pi_j}$ by $\sum\limits_{n=0}^\infty e_nx^n$, the above equality becomes
\[\sum\limits^\infty_{n=0}ne_nx^{n-1}=\Big(\sum\limits_{n=0}^\infty e_nx^n\Big)\Big(\sum\limits^d_{i=1}\sum\limits^\infty_{r=0}\Big(\sum\limits_{j=1}^\ell (c_j\pi_j)^{p^r}\Big)ix^{i p^r-1}  a_i^{p^r}\Big).\]
Then \eqref{E:iteration equality} follows by comparing the $x^{n-1}$-coefficients.
The congruence \eqref{E:congruence} follows from the easy fact that $\big(\sum\limits_{j=1}^\ell c_j\pi_j\big)^{p^r} \equiv \sum\limits_{j=1}^\ell (c_j\pi_j)^{p^r} \pmod p$.
\end{proof}

We now continue with the proof of Theorem~\ref{T:key technical theorem}.
Let $\bar N_{k,1}^\rmT$ be the matrix consisting of the first $k$ columns of $\bar N_k^\rmT$. Then the $(m,n)$-entry of $\bar N_{k, 1}^\rmT$ is just $\bar e_{np-m}$.
Applying Lemma \ref{induction} to $np-m$ in place of $n$ (and then taking the reduction modulo $p$), we deduce
$$-m \bar e_{np-m}  = (np-m)\bar e_{np-m} = \sum\limits_{i=1}^{d}\sum\limits_{r=0}^{\infty}i \cdot \bar e_{np-(m+ip^r)}\bar a_i^{p^r}\bar \gothT^{p^r}$$
in $\FF_q\llbracket \underline \pi \rrbracket$.
Note that the coefficients in the above congruence do not involve the column index $n$.
So if we use $\bar R_m(k)$ to denote the $m$th row of $\bar N_{k, 1}^\rmT$ (and $\bar R_m(k)$ is the zero row if $m > (k-1)p$), we get  
\begin{equation}
\label{E:mod p recursive relation}
m \cdot \bar R_m(k)+\sum\limits_{i=1}^{d}\sum\limits_{r=0}^{\infty}i \cdot \bar R_{m+ip^r}(k) \bar  a_i^{p^r} \bar \gothT^{p^r} = 0
\end{equation}
for all $0\leq m\leq kp-p$.
In other words, the $m$th row of $\bar N_{k,1}^\rmT$ with $m\not\equiv 0 \pmod{p}$ can be written as a linear combination of the rows below it, and the coefficients of this linear combination belong to $\FF_q\llbracket \bar \gothT \rrbracket$ (as opposed to $\FF_q \llbracket \underline \pi \rrbracket$). 

To take advantage of this linear relation among the rows $\bar R_m(k)$, we define the (upper triangular) matrix $\bar A_{k}(\bar \gothT) \in \mathrm M_{kp-p+1}(\FF_q\llbracket \bar \gothT\rrbracket)$ so that, if we write $\bar R_m(k)'$ to denote the $m$th row of $\bar P_{k}:= \bar  A_{k}(\bar \gothT)\bar N_k^\rmT$, then we have
\begin{equation}
\bar R_m(k)'=\begin{cases}
m\bar R_m(k)+\sum\limits_{i=1}^{d}\sum\limits_{r=0}^{\infty}i \cdot \bar R_{m+ip^r}(k) \bar a_i^{p^r}\bar \gothT^{p^r},&\textrm{if } p\nmid m,\\
\bar R_m(k),& \textrm{if } p\,|\,m.
\end{cases}
\end{equation}
Explicitly, if we write $\bar A_{k}(\bar \gothT) = (\bar a_{mn})_{m,n \in \ZZ_{\geq 0}}$, we have
\[
\bar a_{mn} =  \begin{cases}
i \bar a_i^{p^r}\bar \gothT^{p^r}& \textrm{when } n-m = i p^r \textrm{ with }1 \leq i \leq d \textrm{ and } p \nmid i, \\
1 & \textrm{when }m = n \textrm{ and }p \,|\, m,\\
m & \textrm{when }m = n\textrm{ and }p \nmid m,\\
0 & \textrm{otherwise}.
\end{cases}
\]
Note that, in the first case, there is only one term, because the other terms with $p\,|\, i$ are zero modulo $p$.

According to the recurrence relations of $\{e_n\}$ in \eqref{E:mod p recursive relation}, the matrix $\bar P_{k}: = \bar A_{k}(\bar \gothT) \bar N_{k}^\rmT$ takes the following form
\[\bar P_{k}=\Big(\bar p_{mn}\Big)_{\substack{0\leq m \leq kp-p\\0\leq n\leq kp-p}}=\begin{tikzpicture}[baseline,decoration=brace]
\matrix[mymatrix] (m)  {
\bar e_0&\bar e_{p}&\bar e_{2p}&\cdots&\bar e_{(k-1)p}&*&*&*&\cdots&*\\
0 &0 &0&\cdots&0&*&*&*&\cdots&*\\
\vdots  & \vdots  &\vdots&\ddots &\vdots&\vdots&\vdots&\vdots&\ddots&\vdots\\
0&\bar e_{0}&\bar e_p&\cdots&\bar e_{(k-2)p}&*&*&*&\cdots&*\\
0  &  0  &0&\cdots&0&*&*&*&\cdots&*\\
\vdots  & \vdots  &\vdots&\ddots &\vdots&\vdots&\vdots&\vdots&\ddots&\vdots\\
0  &  0  &\bar e_0&\cdots &\bar e_{(k-3)p}&*&*&*&\cdots&*\\
0  &  0  &0&\cdots &0&*&*&*&\cdots&*\\
\vdots  & \vdots  &\vdots&\ddots &\vdots&\vdots&\vdots&\vdots&\ddots&\vdots\\
\vdots  & \vdots  &\vdots&\ddots &\vdots&\vdots&\vdots&\vdots&\ddots&\vdots\\
0  &  0  &0&\cdots & 0&*&*&*&\cdots&*\\
0  &  0  &0&\cdots &\bar e_0&*&*&*&\cdots&*\\
};
\mymatrixbracetop{1}{5}{$k$}
\mymatrixbracetop{6}{10}{$(k-1)(p-1)$}
\mymatrixbraceright{1}{9}{$k$} 
\end{tikzpicture},\]
where for $n\geq k$, $\bar p _{mn}$ is a function of $\bar \gothT$ given by
\begin{equation}
\label{E:explicit pmk}
\bar p_{mn}= \begin{cases}
i \bar a_i^{p^r}\bar \gothT^{p^r}& \textrm{when } n-m = i p^r \textrm{ with }1 \leq i \leq d \textrm{ and } p \nmid i, \\
1 & \textrm{when }m = n \textrm{ and }p \,|\, m,\\
m & \textrm{when }m = n\textrm{ and }p \nmid m,\\
0 & \textrm{otherwise}.
\end{cases}
\end{equation}

Since $\bar A_{k}(\bar \gothT)$ is upper triangular, we have \begin{equation}\label{And}
\det(\bar A_{k}(\bar \gothT))=\prod\limits_{i=1}^{p-1}i^{k-1} =  (-1)^{k-1} \quad \textrm{ in } \FF_q\llbracket \bar \gothT \rrbracket.
\end{equation} 
For a similar reason (and $e_0=1$), we have
\[
\det (\bar P_k) =  \det \left[ \begin{array}{cccccccccc}
	1 & 2 &\cdots&\lfloor \frac{i}{p}\rfloor+i&\cdots&kp-p-1
	\\
	k & k+1 &\cdots&k+i-1&\cdots&kp-p  \end{array} \right]_{\bar P_k}.
\]

Combining these two, we deduce
\begin{align*}
(-1)^{k-1}& \det (\bar N_k^\rmT)  = \det(\bar A_{k}(\bar \gothT)) \det (\bar N_k^\rmT) = 
\det(\bar P_k)
\\
& = \det \left[ \begin{array}{cccccccccc}
	1 & 2 &\cdots&\lfloor \frac{i}{p}\rfloor+i&\cdots&kp-p-1
	\\
	k & k+1 &\cdots&k+i-1&\cdots&kp-p  \end{array} \right]_{\bar P_k}
.
\end{align*}
The key observation here is that the entries of the (sub)matrix 
\[
\left[ \begin{array}{cccccccccc}
	1 & 2 &\cdots&\lfloor \frac{i}{p}\rfloor+i&\cdots&kp-p-1
	\\
	k & k+1 &\cdots&k+i-1&\cdots&kp-p  \end{array} \right]_{\bar P_k}
\]
all lie in the subring $\FF_q\llbracket \bar \gothT \rrbracket$ of $\FF_q\llbracket \underline \pi \rrbracket$, as seen in its explicit form \eqref{E:explicit pmk}. Moreover, the coefficients on these entries are independent of $\ell$.
It follows that
\begin{equation}
\label{E:bar Nnd in FT}
\det (\bar N_{k}^\rmT) \in \FF_q\llbracket \bar \gothT\rrbracket
\end{equation}
is a power series whose coefficients are independent of $\ell$.
The Theorem follows from this and the equality \eqref{N to Nnd}.
\end{proof}

Now, we deduce Theorem~\ref{mainforC}(2) from Theorem~\ref{T:key technical theorem}.

\begin{proof}[Proof of Theorem~\ref{mainforC}(2)]
For $k=nd$ or $nd+1$, we note that $\lambda'_k: = \lambda_k / a = \frac{n(nd-1)(p-1)}2$ or $\frac{n(nd+1)(p-1)}2$ are integers because $p\nmid d$.

Since $N$ is twisted $I$-adically incremental,  Lemma~\ref{L:twisted Hodge bound to det} implies that
\[
\det \left[ \begin{array}{cccccccccc}
0 & 1 &\cdots&k-1 \\
0 & 1 &\cdots&k-1  \end{array} \right]_N  \in I^{\frac{p(0+1+\cdots +(k-1)) - (0+1+\cdots +(k-1))}{d}} = I^{\lambda'_k}.
\]
Combining this with Theorem~\ref{T:key technical theorem}, we see that
\[
\det \left[ \begin{array}{cccccccccc}
0 & 1 &\cdots&k-1 \\
0 & 1 &\cdots&k-1  \end{array} \right]_N  \bmod p= \bar v_{\lambda'_k}  \bar \gothT^{\lambda'_k} + \bar v_{\lambda'_k+1}  \bar \gothT^{\lambda'_k+1} + \cdots \in\bar  \gothT^{\lambda'_k} \FF_q \llbracket \bar \gothT \rrbracket,
\]
where $\bar v_{\lambda'_k} \in \FF_q$ is independent of $\ell$.
Thus,
\begin{equation}
\label{E:congruence N}
\det \left[ \begin{array}{cccccccccc}
0 & 1 &\cdots&k-1 \\
0 & 1 &\cdots&k-1  \end{array} \right]_N  \equiv \bar v_{\lambda'_k} \bar \gothT^{\lambda'_k} \quad \bmod pI^{\lambda'_k} + I^{\lambda'_k+1}. \footnote{Here we are allowed to write $\bar v_{\lambda'_k}$ because only this element modulo $p$ affects the congruence relation.}
\end{equation}

Applying Proposition~\ref{lemma} to the series of product $\sigma^{a-1}(N)\cdots \sigma(N)N$ (whose characteristic power series defines $C_f^*(\underline \pi, s)$), we get
\[
w_k(\underline \pi) \equiv \prod_{i=0}^{a-1}
\bigg(\det \left[ \begin{array}{cccccccccc}
0 &1 &\cdots&k-1
\\
0 &1 &\cdots&k-1 \end{array} \right]_{\sigma^i(N)} \bigg)  \quad \bmod \ {I^{\lambda'_k+1}}.
\]
Combining this with \eqref{E:congruence N}, we deduce
\begin{equation}
\label{E:wk congruence}
w_k(\underline \pi) \equiv \prod_{i=0}^{a-1}\bar v_{\lambda'_k}^{p^i} \cdot \prod_{i=0}^{a-1}
\sigma^i(\bar \gothT)^{\lambda'_k} \equiv \prod_{i=0}^{a-1}\bar v_{\lambda'_k}^{p^i} \cdot \gothS(\underline T)^{\lambda_k/a} \quad \bmod  pI^{\lambda'_k} + I^{\lambda'_k+1},
\end{equation}
where the second congruence made use of the following congruence
\[
\gothS(\underline T): = \prod_{i=1}^\ell \Big(\sum_{j=1}^\ell c_j^{p^i}T_j \Big) \equiv \prod_{i=1}^\ell \Big(\sum_{j=1}^\ell c_j^{p^j}\pi_j \Big) \equiv \prod_{i=1}^\ell \sigma^i(\bar \gothT) \quad \bmod pI^\ell+I^{\ell+1}.
\]
From \eqref{E:wk congruence}, we see that Theorem~\ref{T:key technical theorem}(2) is equivalent to $\prod\limits_{i=0}^{a-1}\bar v_{\lambda'_k}^{p^i} \in \FF_p^\times$.
But as pointed out above, this element is \emph{independent} of $\ell$. We know that Theorem~\ref{mainforC}(2) holds when $\ell=1$, as proved in \cite[Propostion~3.4]{Davis-wan-xiao}, so it holds for all $\ell$.\footnote{Once again, Remark~\ref{R:independence} allows us to prove Theorem~\ref{mainforC}(2) under Hypothesis~\ref{H:teichmuller}.}
\end{proof}

\section{Artin--Schreier--Witt eigenvarieties}
\label{Sec:ASW eigenvarieties}

We now interpret Theorem~\ref{mainforC} using the language of eigenvarieties.
Recall the weight space $\calW$ 
and its admissible locus $\calW^\adm$ from Section~\ref{Sec:weight space}. We remind the readers that $\calW^\adm$ is independent of the choice of the basis $\{c_1, \dots, c_\ell\}$ (Corollary~\ref{C:admissible locus independent}) and contains all the points corresponding to finite non-trivial characters of $\ZZ_{p^\ell}$ (Lemma~\ref{L:admissible locus contains finite characters}).

The \emph{eigenvariety} $\calE_f$ associated to the Artin--Schreier--Witt tower for $\bar f(x)$ is defined as the zero locus of $C_f^*(\underline{T},s)$ inside $(\calW - \{\underline 0\}) \times \GG_{m}^\rig$, where $s$ is the coordinate of the second factor.\footnote{Here we removed the zero point of the weight space, because when $\underline T = \underline 0$, $C_f^*(0, s) = 1 -s$  is very different from other points of the weight space.}
More rigorously, for each affinoid subdomain $U = \Max(A)$ of $(\calW - \{\underline 0\}) \times \GG_{m}^\rig$,\footnote{Once again, we remind the readers that $\GG_m^\rig$ is also an increasing union of annulus $\{ s \in \CC_p\; |\; \alpha < v(s) < \beta\}$ with $\alpha, \beta \in \QQ^\times$, $\alpha$ approaching $-\infty$, and $\beta$ approaching $\infty$.} by restriction, $C_f^*(\underline T, s)$ defines a function on $U$, and $\calE_f$ over $U$ is defined to be $\Max \big(A / (C_f^*(\underline T, s))\big)$.
Gluing over an affinoid cover of $(\calW - \{\underline 0\}) \times \GG_{m}^\rig$ gives rise to $\calE_f$ as a rigid analytic subspace.

Denote the natural projection to the first factor by $\mathrm{wt}: \calE_f \to \calW - \{\underline 0 \}$; and denote the \emph{inverse} of the natural projection to the second factor by 
\[
\alpha: \calE_f \xrightarrow{\mathrm{pr}_2} \GG_{m}^\rig \xrightarrow{x \mapsto x^{-1}} \GG_{m}^\rig.
\]
We use $\calE_f^\adm: = \wt^{-1}(\calW^\adm)$ to denote the preimage of the admissible locus of the eigenvariety.

\begin{theorem}
\label{T:eigencurve theorem}
The admissible locus of the eigenvariety $\calE_f^{\adm}$ is an infinite disjoint union \[X_{0}\coprod X_{(0,1)}\coprod X_{1}\coprod X_{(1,2)}\coprod \cdots\] 
of rigid analytic spaces	such that for each interval $J = [n,n]$ or $(n, n+1)$ with $n \in \ZZ_{\geq 0}$,
\begin{itemize}
\item
the map $\wt: X_J \to \calW^\adm$ is finite and flat of degree $1$ if $J$ represents a point and of degree $d-1$ if $J$ represents a genuine interval, and
\item
for each point $x \in X_J$, we have
\[
\frac{\val_q(\alpha(x))}{\val_q(\gothS(\wt(x)))} \in \frac{a(p-1)}{\ell}\cdot J.
\]
\end{itemize}
\end{theorem}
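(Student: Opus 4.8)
The plan is to translate Theorem~\ref{mainforC} into geometric statements about the zero locus $\calE_f^\adm$ of $C_f^*(\underline T, s)$ over the admissible locus, essentially by reformulating \cite[Theorem~4.2]{Davis-wan-xiao} after the substitution $T = \gothS(\underline T)$. First I would fix an affinoid subdomain $U = \Max(A)$ of $\calW^\adm$ and normalize: by definition of $\calW^\adm$, on $U$ we have $\val_q(\gothS(\underline t)) = \ell \cdot \min_j\{\val_q(t_j)\}$, so after rescaling $\gothS(\underline T)$ plays the role of a uniformizer-type parameter $\pi$ on $U$ analogous to the boundary annulus parameter in the $\ell = 1$ case. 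The key input is that Theorem~\ref{mainforC}(1) gives $w_k(\underline T) \in I^{\lceil \lambda_k\rceil}$, so on $U$ the $q$-adic valuation of $w_k$ is at least $\lambda_k \cdot \min_j\{\val_q(t_j)\} = \frac{\lambda_k}{\ell}\val_q(\gothS(\underline t))$; and Theorem~\ref{mainforC}(2) says this bound is attained, and moreover that the leading term is a unit multiple of $\gothS(\underline T)^{\lambda_k/\ell}$, precisely at the "vertex" indices $k = nd$ and $k = nd+1$.

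The heart of the argument is a Newton-polygon-over-a-family statement: the $q$-adic Newton polygon of the one-variable specialization $C_f^*(\underline t, s)$ has vertices exactly at $(nd, \frac{n(nd-1)}{2p^{m}\ell/\ell\cdots})$—more precisely, writing $v = \val_q(\gothS(\underline t))/\ell = \min_j\val_q(t_j)$, the vertices are at $\big(nd, \lambda_{nd} v\big)$ and $\big(nd+1, \lambda_{nd+1} v\big)$ for all $n \geq 0$, with the slopes between consecutive vertices being constant on all of $\calW^\adm$ up to scaling by $v$. This is exactly the content one gets by combining parts (1) and (2): part (1) forces the Newton polygon above the polygon through $(k, \lambda_k v)$, and part (2) forces it to touch at $k = nd, nd+1$, with the touching coefficients being genuine units (not just of minimal valuation but with unit leading $\gothS$-coefficient), which is what lets the factorization persist in families rather than just pointwise. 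Then I would invoke the standard theory of relative Newton polygons / slope factorizations of Fredholm series over affinoids (as in \cite{liu-wan-xiao} or the Coleman--Mazur--Buzzard machinery, which is what \cite{Davis-wan-xiao} uses): because the Newton polygon has a genuine vertex at each $k = nd$ and each $k = nd+1$, the series $C_f^*(\underline T, s)$ factors over $\calW^\adm$ as a product of polynomial pieces $Q_J(\underline T, s)$ for each interval $J = [n,n]$ (corresponding to the single slope segment from $nd$ to $nd+1$, of horizontal length $1$) and $J = (n, n+1)$ (corresponding to the segment from $nd+1$ to $(n+1)d$, of horizontal length $d-1$). Setting $X_J = Z(Q_J) \subset \calW^\adm \times \GG_m^\rig$ gives the disjoint decomposition, the weight map $X_J \to \calW^\adm$ is finite flat of degree $= $ horizontal length of the corresponding segment (so degree $1$ or $d-1$), and the slope bound follows since the slopes of $Q_J$ lie in $\frac{a(p-1)}{\ell} \cdot J$ once we account for the normalization $\lambda_k = \frac{ak(k-1)(p-1)}{2d}$ and the scaling by $\val_q(\gothS(\wt(x)))$.

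The main obstacle I expect is the family version of the slope factorization: one needs that the factorization of $C_f^*(\underline t, s)$ into slope pieces, valid pointwise by Theorems~\ref{mainforL}--\ref{coincide} (or \ref{mainforC}), glues into an honest factorization of the Fredholm series over each affinoid $U \cap \calW^\adm$ with the factors being (relative) polynomials in $s$ over $A\langle \tfrac{T_1^\ell}{\gothS},\dots,\tfrac{T_\ell^\ell}{\gothS}\rangle$. This requires that the relevant coefficient $w_{nd}$ (resp.\ $w_{nd+1}$), after dividing by the appropriate power of $\gothS(\underline T)$, becomes a \emph{unit} in the affinoid ring—which is exactly guaranteed by the "unit $u_k$ times $\gothS(\underline T)^{\lambda_k/\ell}$" shape in \eqref{leading term} together with the definition of $\calW^\adm$ as the locus where $\gothS$ has the expected valuation, so that $\gothS(\underline T)$ is invertible up to bounded denominators on $U \cap \calW^\adm$. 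Once this unit property is in hand, the factorization, finiteness, flatness, and degree count are all formal consequences of the Weierstrass-type preparation for Fredholm series over affinoids, exactly paralleling \cite[Theorem~4.2]{Davis-wan-xiao}; the degrees $1$ and $d-1$ come from the horizontal lengths $nd+1-nd = 1$ and $(n+1)d - (nd+1) = d-1$ of the two types of Newton segments determined by Theorem~\ref{mainforC}.
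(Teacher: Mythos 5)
Your proposal is correct and follows essentially the same route as the paper: both deduce, from Theorem~\ref{mainforC} together with the definition of $\calW^\adm$, that at every point of the admissible locus the $q$-adic Newton polygon of $C_f^*(\underline t,s)$ has vertices at $k=nd$ and $k=nd+1$ with the stated normalized slopes, and then invoke the standard eigenvariety decomposition machinery of \cite{buzzard-kilford, liu-wan-xiao, Davis-wan-xiao}. The only cosmetic difference is that the paper defines the pieces $X_J$ directly by the valuation conditions on $\val_q(a_p)/\val_q(\gothS(\wt(x)))$ and reads off the degrees from the pointwise zero count, rather than packaging the argument as a relative Weierstrass/slope factorization as you do, but these amount to the same argument.
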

\begin{proof}
Similar arguments have appeared multiple times in the literature; see \cite[Theorem A]{buzzard-kilford}, \cite[Theorem~1.3]{liu-wan-xiao}, or \cite[Theorem~4.2]{Davis-wan-xiao}. So we only sketch the proof here.

For a continuous character $\chi$ of $\ZZ_{p^\ell}$ whose corresponding points lies on the admissible locus $\calW^\adm$, Theorem~\ref{coincide} (see Remark~\ref{R:all cont char okay}) implies that the $q$-valuations of the zeros of $C_f^*(\chi, s)$ consists of
\begin{itemize}
\item
for all $n$, exactly one zero has valuation $-\val_q(\gothS(\chi)) \cdot \frac{an(p-1)}\ell$, and
\item
for all $n$, exactly $d-1$ zeros (counted with multiplicity) have valuations in the interval
\[
-\val_q(\gothS(\chi)) \cdot \frac{a(p-1)}\ell \cdot \big[n+\tfrac 1d, n+\tfrac {d-1}d\big].
\]
\end{itemize}
From this, we see that $\calE_f^\adm$ is the disjoint union of the following subspaces
\[
X_{[n,n]}: = \calE_f^\adm \cap \big\{ (\underline t, a_p) \in  \calW^\adm \times \GG_m^\rig\; \big|\; \val_q(a_p) = \val_q(\gothS(\underline t)) \cdot \tfrac{an(p-1)}\ell \big\}, \textrm{ and}
\]
\begin{align*}
&X_{(n,n+1)}: = \calE_f^\adm \cap \big\{ (\underline t, a_p) \in \calW^\adm  \times \GG_m^\rig\; \big|\; \val_q(a_p) \in \val_q(\gothS(\underline t)) \cdot \tfrac{a(p-1)}\ell \cdot (n,n+1) \big\}
\\
&\quad= \calE_f^\adm \cap \big\{ ( \underline t, a_p) \in  \calW^\adm \times \GG_m^\rig\; \big|\; \val_q(a_p) \in \val_q(\gothS(\underline t)) \cdot \tfrac{a(p-1)}\ell \cdot [n+\tfrac 1d,n+\tfrac{d-1}d] \big\}.
\end{align*}
Note that, restricting to every open affinoid subdomain of $\calE_f^\adm$, the above decomposition is a union of affinoid subdomains. So $\calE_f^\adm = \coprod\limits_{n=0}^\infty \big( X_{[n,n]} \sqcup X_{(n,n+1)} \big)$ is a decomposition into an infinite disjoint union of rigid subspaces. The degree of each $X_J$ follows from the description of the number of zeros above.
\end{proof}


\appendix
\section{Errata for \cite{Davis-wan-xiao}}

$\bullet$ On the lower half of page 1458, the displayed formula
\[
E_f(x) = \sum_{j=0}^\infty u_j \pi^{j/d}x^j \in B, \quad \textrm{for } u_j \in \ZZ_p.
\]
should have $u_j \in \ZZ_p\llbracket \pi^{1/d}\rrbracket$ instead.

$\bullet$ (pointed out to us by Hui June Zhu) {\bf Theorem 3.8}  on Line 2 of the second paragraph of its proof, we took $\lambda'_i$ to be the minimal integer satisfying certain properties. There might not be such $\lambda'_i$, in which case we should simply take $\lambda'_i$ to be infinity. This will not affect the proof, as all we care are those $\lambda'_i$'s that are ``close" to the lower bound polygon.

$\bullet$ {\bf Theorem~4.2(2)} The statement that each $\calC_{f,i}$ is finite and flat over $\calW$ is not literally true because $\calC_{f,i}$ often misses the point over $T=0$ in $\calW$, as the slopes at points on $\calC_{f,i}$ tend to $\infty$ as $T$ approaches to $0$. So one should replace the $\calW$ and $\calC_f$ in the statement with $\calW^\circ : = \calW- \{0\}$ and $\calC_f^\circ: = \calC_f - \mathrm{wt}^{-1}(0)$.

\end{document}